  \newcommand{\Res}{\operatorname{Res}}
  \newcommand{\Ind}{\operatorname{Ind}}
  \newcommand{\Fix}{\operatorname{Fix}}
  \newcommand{\Tr}{\operatorname{Tr}}
  \newcommand{\cyc}[1]{\langle\,#1\,\rangle}
  \newcommand{\Br}{\operatorname{IBr}}
  \newcommand{\Irr}{\operatorname{Irr}}
  \newcommand{\Aut}{\operatorname{Aut}}
  \newcommand{\sym}{\mathfrak{S}}
  \newcommand{\C}{\mathbb{C}}
  \newcommand{\N}{\mathbb{N}}
  \newcommand{\Z}{\mathbb{Z}}
  \newcommand{\A}{\mathfrak A}
  \newcommand{\B}{\cal B}
\newcommand{\cal}[1]{\mathcal{#1}}
\newtheorem{theorem}{Theorem}[section] 
\newtheorem{lemma}[theorem]{Lemma}     
\newtheorem{proposition}[theorem]{Proposition}
\newtheorem{remark}[theorem]{Remark}
\newtheorem{convention}[theorem]{Convention}
\title[A basic set for the alternating group]
  {A basic set for the alternating group}
\author{Olivier Brunat}
\address{Ruhr-Universit\"at Bochum\\
Fakult\"at f\"ur Mathematik\\
Raum NA 2/33\\
D-44780 Bochum\\}
\email{Olivier.Brunat@ruhr-uni-bochum.de}
\author{Jean-Baptiste Gramain}
\address{\'Ecole Polytechnique F\'ed\'erale de Lausanne\\
Institut de g\'eom\'etrie, alg\`ebre et topologie\\
B\^atiment de Chimie (BCH)\\
CH-1015 Lausanne\\}
\email{jean-baptiste.gramain@epfl.ch}
\begin{document}

\begin{abstract}
This article concerns the $p$-basic set existence problem in the
representation theory of finite groups. We show that, for any odd prime $p$, the alternating
group $\A_n$ has a $p$-basic set.
More precisely, we prove that the symmetric group $\sym_n$
has a $p$-basic set 
with some additional properties, 
allowing us to deduce a $p$-basic set for $\A_n$. 
Our main tool is the generalized perfect
isometries introduced by K\"ulshammer, Olsson and Robinson.
As a consequence we obtain some results on the decomposition number of
$\A_n$.
\end{abstract}

\maketitle

\section{Introduction}\label{intro}

Let $G$ be a finite group and $p$ be a prime. We are interested in
the representation theory of $G$ over a field of characteristic $p$.
In the framework of Brauer's modular representation theory, we can
construct a decomposition map $d_p:\cal R_0(G)\rightarrow \cal R_p(G)$
from the Grothendieck group of finite-dimensional representations
of $G$ in characteristic $0$ to the analogous Grothendieck group
in characteristic $p$. A fundamental result of Brauer's shows that
this map is surjective. This motivates the following definition. A set
$B$ of irreducible representations of $G$ in characteristic $0$ is
called a $p$-{\emph{basic set}} of $G$ if the images of the classes
of the elements in $B$ under $d_p$ form a basis of $\cal R_p(G)$. Note
that it is not clear if $p$-basic sets always exist. The
$p$-basic sets are powerful tools to compute the $p$-decomposition
matrix of $G$. For example, Hiss in~\cite{HissG2}
and Geck in~\cite{geck3D4} used them to compute the $p$-decomposition
matrices of $G_2(q)$ and $SU_3(q^2)$ respectively, when $p$ is
not the defining characteristic. It is an open question however if such
$p$-basic sets exist in general. The answer is known to be positive in
the following situations: for $p$-soluble groups (this is a direct
consequence of the result of Fong-Swan in~\cite[X.2.1]{Feit}), for
finite groups of Lie type in the non-defining characteristics under some
additional hypotheses (cf \cite{GH}, \cite{GeckBS} and \cite{GeckBS3}),
for the finite general linear groups $\operatorname{GL}_2(q)$,
$\operatorname{GL}_3(q)$ and $\operatorname{GL}_4(q)$ in the defining
characteristic (cf \cite{Br1}), and also for the symmetric group
$\sym_n$ (cf \cite[6.3.60]{James-Kerber}).

This question is still open for the alternating group $\A_n$. In the
present work, we prove that $p$-basic sets do exist for $\A_n$, whenever
$p$ is an odd prime. For every subset $B$ of irreducible characters
of $\sym_n$, we denote by $B_{\A_n}$ the set of all irreducible
constituents of any of the $\Res_{\A_n}^{\sym_n}(\chi)$'s ($\chi\in\
B$). We say that $B_{\A_n}$ is the {\emph{restriction}} of $B$ to
$\A_n$. Our approach is to find a $p$-basic set $B$ of $\sym_n$
which restricts to a $p$-basic set of $\A_n$. If $B$ is a $p$-basic
set of $\sym_n$, the set $B_{\A_n}$ is in general not a $p$-basic
set of $\A_n$. For example, the $p$-basic set of $\sym_n$ described
in~\cite[6.3.60] {James-Kerber} does not restrict to a $p$-basic set of
$\A_n$.

In \cite[\S4]{GeckBS}, Geck gives conditions on $B$ ensuring that
$B_{\A_n}$ is a $p$-basic set. More precisely, he proves that, for
any finite group $G$ with a normal subgroup $H$ such that $G/H$ is
abelian, and for any prime $p$ not dividing $|G/H|$, if there is a $\cal
R_0(G/H)$-stable $p$-basic set $B$ of $G$ such that the set $d_p(B_H)$
is a system of generators  of $\cal R_p(H)$, then $B_H$ is a
$p$-basic set of $H$. To prove that $\A_n$ has a $p$-basic set, we will
construct a $p$-basic set of $\sym_n$ which satisfies Geck's conditions.

In order to state our main result, we first recall some results
and notations. The conjugacy classes and irreducible characters
of $\sym_n$ are canonically labelled by the partitions of $n$
(cf \cite[2.1.11]{James-Kerber}). For $\lambda$ a partition of
$n$ (written $\lambda \vdash n$), we write $\chi_{\lambda}$ the
corresponding irreducible character of $\sym_n$. We denote by
$\varepsilon$ the sign character of $\sym_n$. This is the linear
character of $\sym_n$ which extends the non-trivial character of
$\Irr(\sym_n/\A_n)$. Moreover if $\chi_{\lambda}\in\Irr(\sym_n)$, then
$\varepsilon\chi_{\lambda}=\chi_{\lambda^*}$, where $\lambda^*$ denotes
the conjugate partition of $\lambda$. If $\lambda=\lambda^*$, then we
say that $\lambda$ is {\emph{self-conjugate}}.

Now let $p$ be a prime. A partition $\lambda$ of $n$ is said to be
$p${\emph{-regular}} if it has no parts divisible by $p$ (because
$\lambda$ then labels a conjugacy class of $p$-regular elements of
$\sym_n$). Note that, sometimes, one also calls $p${\emph{-regular}}
any partition of $n$ with no part repeated more than $p-1$ times (cf
\cite[6.1]{James-Kerber}). With this terminology, the set of $p$-regular
partitions of $n$ labels the $p$-basic set of $\sym_n$ in \cite[6.3.60]
{James-Kerber}. However, as we mentionned above, this basic set does not
restrict to a basic set of $\A_n$, and we will never in this paper use
$p$-regular in this sense.

For $\lambda \vdash n$, we denote by $\gamma(\lambda)$ its $p$-core
and by $\alpha_{\lambda}=(\lambda^{(1)},\ldots,\lambda^{(p)})$ its
$p$-quotient. In the following, the $i$-th part $\lambda^{(i)}$
of the $p$-quotient $\alpha_{\lambda}$ will also be denoted
by $\alpha_{\lambda}^{i}$. Note that the $p$-quotient of
$\lambda$ is not uniquely defined, but depends on a convention
(or a choice of origin). This is described more precisely
in~\S\ref{conjugationabacus}. However, one can show (cf Lemma
\ref{conjugationabacus} and Convention~\ref{convention}) that, for a
certain choice of origin, the $p$-quotient of $\lambda^*$ is given by
$\alpha_{\lambda^*}=(\lambda^{(p)^*},\ldots,\lambda^{(1)^*})$ (and this
independantly on the choice of $\lambda$, or even of $n$).

Finally, to each self-conjugate partition
$\lambda=(\lambda_1,\ldots,\lambda_k)$ of $n$, we associate the
partition $\overline{\lambda}$ of $n$ given by
$$\overline{\lambda}=(2\lambda_1-1,2\lambda_2-3,\ldots,2\lambda_k
-(2k-1)).$$
Note that the parts of $\overline{\lambda}$ are given by the lengths
of the {\emph{diagonal hooks}} in the Young diagram of $\lambda$ (i.e.
those hooks whose top left corner lies on the diagonal). Our main result
is:

\begin{theorem}\label{BSn}
We keep the notation as above. Let $p$ be an odd prime and let $n\in\N$.
We set
$$\Lambda_{\emptyset}=\{\lambda\vdash n\ |\ \alpha_{\lambda}^{(p+1)/2}
=\emptyset\}.$$ 
Then the subset $\B_{\emptyset}=\{\chi_{\lambda}\ |\
\lambda\in\Lambda_{\emptyset}\}$ is a $p$-basic set of the symmetric
group $\sym_n$. Furthermore, $\B_{\emptyset}$ satisfies the following properties
\begin{enumerate} 
\item If $\chi_{\lambda}\in \B_{\emptyset}$, then $\chi_{\lambda^*}\in
\B_{\emptyset}$.
\item If $\lambda=\lambda^*$, then $\chi_{\lambda}\in \B_{\emptyset}$ if
and only if the partition $\overline{\lambda}$ is $p$-regular.
\end{enumerate}
\end{theorem}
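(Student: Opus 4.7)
The plan is to establish the basic set property via a blockwise analysis using the generalized perfect isometries of K\"ulshammer, Olsson and Robinson, and then to verify the two structural properties by direct combinatorial arguments. By the Nakayama conjecture, $\chi_{\lambda}$ and $\chi_{\mu}$ lie in the same $p$-block of $\sym_n$ if and only if $\gamma(\lambda)=\gamma(\mu)$, so we may argue block by block. Fix a $p$-block $B_{\gamma}$ of weight $w$. Its ordinary characters are parametrized by $p$-quotients, i.e., by $p$-tuples of partitions summing to $w$, and $\Lambda_{\emptyset}\cap B_{\gamma}$ consists of those with the middle slot empty, hence is indexed by $(p-1)$-tuples of partitions summing to $w$. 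A generating-function argument shows this cardinality equals the number of $p$-regular (in the James--Kerber sense) partitions of $n$ with core $\gamma$, and hence equals the number of irreducible $p$-Brauer characters of $B_{\gamma}$. To upgrade this equinumerosity to a basic set, we transport a known basic set via the generalized perfect isometry between $B_{\gamma}$ and a suitable character ring on a wreath product: under this correspondence the middle-empty condition translates into a natural basic set on the wreath-product side, and the unitriangularity of the decomposition submatrix of $B_{\gamma}$ indexed by $\Lambda_{\emptyset}\cap B_{\gamma}$ follows by transport.

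For property (1) we use the convention $\alpha_{\lambda^*}=(\lambda^{(p)^*},\ldots,\lambda^{(1)^*})$ recalled before the theorem. Since $p$ is odd we have $p+1-(p+1)/2=(p+1)/2$, so the middle slot of $\alpha_{\lambda^*}$ is the conjugate of $\alpha_{\lambda}^{(p+1)/2}$. Hence $\alpha_{\lambda}^{(p+1)/2}=\emptyset$ if and only if $\alpha_{\lambda^*}^{(p+1)/2}=\emptyset$, so $\Lambda_{\emptyset}$ is stable under conjugation of partitions and $\B_{\emptyset}$ is stable under tensoring with $\ep$.

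For property (2) assume $\lambda=\lambda^*$; we must show that $\alpha_{\lambda}^{(p+1)/2}=\emptyset$ if and only if $\overline{\lambda}$ has no part divisible by $p$. The plan is to analyse the $p$-abacus display of $\lambda$: self-conjugacy induces a symmetry pairing runner $i$ with runner $p+1-i$, and makes the middle runner self-symmetric. The diagonal hooks of $\lambda$ are encoded on this abacus, and the key identification is that the beads of the middle runner are in natural bijection with the diagonal hooks of $\lambda$ whose length is divisible by $p$. Consequently, the middle runner being empty (after sliding to the $p$-core) is equivalent to the absence of parts divisible by $p$ in $\overline{\lambda}$. The main obstacle here is establishing this bead-to-diagonal-hook correspondence precisely, which requires tracing how the removal of a $p$-hook on the middle runner corresponds to decreasing a single part of $\overline{\lambda}$ by $2p$, and how self-conjugacy forces the bead pattern on the middle runner to record exactly the diagonal hook lengths modulo $p$.
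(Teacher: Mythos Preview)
Your overall strategy---blockwise reduction via Nakayama, then transport along the K\"ulshammer--Olsson--Robinson generalized perfect isometry between a $p$-block of weight $w$ and $\Irr(G_{p,w})$ where $G_{p,w}=(\Z_p\rtimes\Z_{p-1})\wr\sym_w$---is exactly the paper's approach, and your treatments of properties (1) and (2) are essentially the paper's Lemma~3.1/Convention~3.2 and Lemma~3.3. However, two points need correction.

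First, you assert that ``the middle-empty condition translates into a natural basic set on the wreath-product side'' without saying why. The cardinality count is not enough: one needs an argument that the $\chi^{\alpha}$ with $\alpha^{(p+1)/2}=\emptyset$ actually form a $\cal C_{\emptyset}$-basic set of $G_{p,w}$. The paper supplies this by writing $G_{p,w}=\cal H\rtimes\cal K$ with $\cal H\cong\Z_p^w$ and $\cal K\cong\Z_{p-1}\wr\sym_w$, observing that $\cal K\subseteq\cal C_{\emptyset}$ and that $|Cl_{G_{p,w}}(\cal C_{\emptyset})|=|Cl_{\cal K}(\cal K)|$, and then showing (Proposition~4.2) that in any such semidirect product the characters with $\cal H$ in their kernel form a $\cal C$-basic set; these are precisely the $\chi^{\alpha}$ with middle slot empty. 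This is the genuine content you are missing.

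Second, your claim that ``the unitriangularity of the decomposition submatrix of $B_{\gamma}$ indexed by $\Lambda_{\emptyset}\cap B_{\gamma}$ follows by transport'' is false: the paper explicitly exhibits $\sym_6$ at $p=3$ as a counterexample where $D_{\B_{\emptyset}}$ has no wedge shape. What transport (Proposition~2.2) gives you is the basic set property directly---$\Z$-invertibility of the restriction matrix---not unitriangularity. So the argument should conclude ``basic set'' immediately from the transported $\cal C_{\emptyset}$-basic set, and drop the unitriangularity claim entirely.
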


Note that the additional properties (1) and (2) are direct consequences
of Convention~\ref{convention} (cf Lemma \ref{preg}). The hardest part
is to prove that $\B_{\emptyset}$ is indeed a $p$-basic set.
As a consequence we will prove that $\cal B_{\emptyset,\A_n}$, the
restriction of the $p$-basic set $\cal B_{\emptyset}$ obtained in the
above theorem, is a $p$-basic set of $\A_n$ (see Theorem~\ref{BSAn}).

\smallskip

In order to do this, we will need a more general concept of basic sets,
which goes as follows. Let $\cal C$ denote a union of conjugacy classes
of $G$. For every class function $\varphi$ on $G$, we define a class
function $\varphi^{\cal C}$ by letting $\varphi^{\cal C}(g)=\varphi(g)$
if $g\in \cal C$, and $\varphi^{\cal C}(g)=0$ otherwise. Let $b$ be a
subset of $\Irr(G)$. A $\cal C${\emph{-basic set}} of $b$ is a subset
$B\subseteq b$, such that the family $B^{\cal C}=\{\chi^{\cal C} |
\chi\in B\}$ is a $\Z$-basis of the $\Z$-module generated by $b^{\cal
C}=\{\chi^{\cal C} | \chi\in b\}$. If $b$ is the whole of $\Irr(G)$,
then a $\cal C$-basic set of $b$ is just called $\cal C$-basic set of
$G$ (or for $G$). Moreover, we note that if $p$ is a prime, and if we
take $\cal C$ to be the set of $p$-regular elements of $G$, then we
write $\cal C=p$-reg, and $\cal C$-basic sets are the same as $p$-basic
sets as defined before.

We now present the notions of {\emph{generalized blocks}} and
{\emph{generalized perfect isometry}} introduced by K\"ulshammer,
Olsson and Robinson in \cite{KOR}. Let $\cal C$ be a union of
conjugacy classes of a finite group $G$. For any complex-valued class
functions $\alpha, \, \beta$ of $G$, we let $$\cyc{\alpha,\beta}_{\cal
C}=\frac{1}{|G|}\sum_{g\in \cal C}\alpha(g)\overline{\beta(g)}.$$ If
$\cyc{\alpha,\beta}_{\cal C}=0$, then $\alpha$ and $\beta$ are said
to be {\emph{orthogonal across}} ${\cal C}$. Note that, if $\chi, \,
\varphi \in \Irr(G)$, then $\cyc{\chi,\varphi}_{\cal C}=\cyc{\chi^{\cal
C},\varphi}_G=\cyc{\chi^{\cal C},\varphi^{\cal C}}_G$, where $\chi^{\cal
C}$ and $\varphi^{\cal C}$ are the class functions we defined before,
and $\cyc{ \; , \; }_G$ is the ordinary scalar product on class
functions of $G$.

We define the $\cal C${\emph{-blocks}} of $G$ to be the minimal
subsets of $\Irr(G)$ subject to being orthogonal across ${\cal C}$.
In particular, it is well-known that, if ${\cal C}$ is the set of
$p$-regular elements of $G$, then the ${\cal C}$-blocks are just the
$p$-blocks of modular representation theory. Note also that the ${\cal
C}$-blocks are always the same as the $(G \setminus {\cal C})$-blocks.

Finally, note that, for $\chi \in \Irr(G)$, $\{ \chi \}$ is a ${\cal
C}$-block if and only if $\chi = \chi^{\cal C}$ (if $1 \in {\cal C}$) or
$\chi = \chi^{G \setminus \cal C}$ (if $1 \not \in {\cal C}$).

For $b \subseteq \Irr(G)$, we write $(b,\cal C)$ to indicate that
$b$ is a $\cal C$-block. Following~\cite{KOR}, we say that there
is a {\emph{generalized perfect isometry}} between two blocks
$(b,\cal C)$ and $(b',\cal C')$ of $G$ and $H$ respectively if there
exists a bijection ${\cal I} \colon b \rightarrow b'$ and signs
$\{ \eta(\chi) , \, \chi \in b \}$ such that, for all $\chi, \,
\varphi \in b$, $\cyc{\chi,\varphi}_{\cal C}=\cyc{\eta(\chi)\cal
I(\chi),\eta(\varphi)\cal I(\varphi)}_{\cal C'}$. Writing ${\cal
I}_{\eta}(\chi)$ for $\eta(\chi)\cal I(\chi)$ ($\chi \in b$), we will
say, with a slight abuse of notation, that ${\cal I}_{\eta} \colon
(b, {\cal C}) \rightarrow (b', {\cal C}')$ is a generalized perfect
isometry. Note that ${\cal I}_{\eta}$ induces an $\C$-vector space
isomorphism between the subspaces of $\C \Irr(G)$ and $\C \Irr(H)$
generated by $b$ and $b'$ respectively, which we will also denote by
${\cal I}_{\eta}$.

Our main tool to construct a $p$-basic set of $\sym_n$ as in
Theorem~\ref{BSn} will be the following fact (cf Proposition
\ref{BSisometrie}): with the notations above, if ${\cal I}_{\eta} \colon
(b,\cal C)\rightarrow (b',\cal C')$ is a generalized perfect isometry,
then $B \subseteq b$ is a $\cal C$-basic set of $b$ if and only if ${\cal
I}(B) \subseteq b'$ is a $\cal C'$-basic set of $b'$.

\medskip
For any odd prime $p$ and any positive integer $w$, we set
$$G_{p,w}=(\Z_p\rtimes\Z_{p-1})\wr \sym_w.$$ We can now explain more
explicitly our strategy to prove Theorem~\ref{BSn}.
\begin{itemize}
\item
We obtain a generalized perfect isometry
$${\cal I}_{\eta} \colon (\Irr(G_{p,w}),\cal
C_{\emptyset})\rightarrow (b,p\textrm{-reg}),
$$ where $b$ is any $p$-block of $\sym_n$ of weight $w>0$, and $\cal
C_{\emptyset}$ is a union of conjugacy classes described in~\S\ref{wp}.
\item The signature~$\varepsilon$ of $\sym_n$ permutes the $p$-blocks of
$\sym_n$. We construct a $\cal C_{\emptyset}$-basic set $B_{\emptyset}$
of $G_{p,w}$ such that, if $b$ is an $\varepsilon$-stable $p$-block
of $\sym_n$ of weight $w>0$, then ${\cal I}(B_{\emptyset})$ is an
$\varepsilon$-stable $p$-basic set of $b$.
\item We use this to obtain an $\varepsilon$-stable $p$-basic set of
$\sym_n$, and check that Theorem~\ref{BSn}(2) holds.\\
\end{itemize}

The article is organized as follows. In Section~\ref{part2}, we
show that a $\cal C$-basic set of a $\cal C$-block $b$ is mapped
via a generalized perfect isometry ${\cal I}_{\eta} \colon (b,\cal
C)\rightarrow (b',\cal C')$ to a $\cal C'$-basic set of $b'$. In
Section~\ref{part3}, we present a generalized perfect isometry,
based on the work of the second author in \cite{JB}, between any
$p$-block of $\sym_n$ of weight $w>0$ and $\Irr(G_{p,w})$, with respect
to a certain union of conjugacy classes $\cal C_{\emptyset}$. In
Section~\ref{part4}, we describe a $\cal C_{\emptyset}$-basic set of
$G_{p,w}$. Finally in Section~\ref{part5}, we prove Theorem~\ref{BSn}.
We will show that this $p$-basic set of $\Irr(\sym_n)$ satisfies Geck's
conditions. As a consequence, we prove in Section~\ref{part5} that,
for any odd prime $p$, there is a $p$-basic of $\A_n$. Finally, in
Section~\ref{consequence}, we give some results on the decomposition
numbers of $\A_n$.

\section{Results on basic sets and generalized perfect
isometries}\label{part2}
We keep the notation as in Section~\ref{intro}.
\begin{lemma}\label{blocks}
Let $G$ be a finite group and let $\cal C$ be a union of conjugacy
classes of $G$. We denote by $\operatorname{Bk}_{\cal C}(G)$
the set of $\cal C$-blocks of $G$. If $B$ is a $\cal C$-basic
set of $G$ then, for every $b\in\operatorname{Bk}_{\cal C}(G)$,
$B\cap b$ is a $\cal C$-basic set of $b$. Conversely, if every
$b\in\operatorname{Bk}_{\cal C}(G)$ has a $\cal C$-basic set, $B_b$ say,
then $$B=\bigcup_{b\in\operatorname{Bk}_{\cal C}(G)} B_b$$ is a $\cal
C$-basic set of $G$.
\end{lemma}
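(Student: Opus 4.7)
The plan is to reduce the lemma to proving that the $\Z$-module $V=\Z\langle\Irr(G)^{\cal C}\rangle$ decomposes as a direct sum $V=\bigoplus_{b\in\operatorname{Bk}_{\cal C}(G)}V_b$, where $V_b=\Z\langle b^{\cal C}\rangle$ is the $\Z$-submodule generated by the $\chi^{\cal C}$ with $\chi\in b$. Each generator $\chi^{\cal C}$ of $V$ lies in the unique summand $V_b$ indexed by the $\cal C$-block $b$ containing $\chi$, so a $\Z$-basis of the form $B^{\cal C}$ is automatically compatible with the decomposition. Once the direct sum is in hand, both halves of the lemma reduce to a standard fact about $\Z$-modules: a $\Z$-basis of $V$ compatible with a direct sum decomposition splits componentwise into $\Z$-bases of the summands (this gives the first assertion, with $B\cap b$ furnishing the basis of $V_b$), and conversely a disjoint union of $\Z$-bases of the summands is a $\Z$-basis of $V$ (this gives the second).

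The heart of the work is therefore to establish that the sum is direct; generation $V=\sum_b V_b$ is obvious. I would invoke the defining orthogonality property of $\cal C$-blocks: for $\chi\in b$ and $\varphi\in b'$ with $b\ne b'$, one has $\cyc{\chi^{\cal C},\varphi^{\cal C}}_G=\cyc{\chi,\varphi}_{\cal C}=0$, and bilinearity then gives $\cyc{v,v'}_G=0$ for all $v\in V_b$ and $v'\in V_{b'}$. Given a relation $\sum_b v_b=0$ with $v_b\in V_b$, pairing with a fixed $v_{b_0}$ via $\cyc{\;,\;}_G$ kills all cross terms and yields $\cyc{v_{b_0},v_{b_0}}_G=0$. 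Since the ordinary scalar product on class functions of $G$ is positive definite, this forces $v_{b_0}=0$ for every $b_0$, whence the sum is direct.

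I do not anticipate any real obstacle here: the argument is essentially the block-orthogonality trick of ordinary character theory, transplanted to the general $\cal C$-setting. The only point worth noting is that the directness, although proved by an orthogonality argument valid over $\C$, descends automatically to $\Z$, because the relation $v_{b_0}=0$ is deduced in the ambient space of complex-valued class functions and therefore holds in the $\Z$-submodule $V_{b_0}\subseteq V$ as well.
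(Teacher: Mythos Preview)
Your proof is correct and rests on the same idea as the paper's: orthogonality across $\cal C$ between characters lying in distinct $\cal C$-blocks. The packaging, however, is a bit different. The paper argues element by element: given $\chi\in b$, it writes $\chi^{\cal C}$ in the basis $B^{\cal C}$ and then shows, by testing against every $\varphi\in\Irr(G)$, that the coefficients coming from $B\setminus b$ can be dropped (and similarly for the freeness in the converse direction). You instead establish once and for all the internal direct sum $V=\bigoplus_b V_b$ via positive definiteness of $\cyc{\;,\;}_G$, and then invoke the general fact that a $\Z$-basis of a direct sum each of whose members lies in a single summand restricts to a $\Z$-basis of each summand. Your route is slightly more conceptual and cleanly separates the analytic input (orthogonality/positive definiteness) from the purely algebraic conclusion; the paper's route is more hands-on but avoids the small extra step of translating between $B\cap b$ and $B^{\cal C}\cap V_b$ (which is harmless here since $B^{\cal C}$ is free, so $\chi\mapsto\chi^{\cal C}$ is injective on $B$). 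One tiny remark: the form $\cyc{\;,\;}_G$ is sesquilinear rather than bilinear, but this does not affect your argument.
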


\begin{proof}
We suppose that $B$ is a $\cal C$-basic set of $G$. Let $b$ be a $\cal
C$-block of $G$. We will prove that $b\cap B$ is a $\cal C$-basic set
of $b$. It is clear that $(b\cap B)^{\cal C}$ is free, being a subset
of $B^{\cal C}$. We have to prove that $(b\cap B)^{\cal C}$ generates
$b^{\cal C}$ over $\Z$. Let $\chi\in b$. Since $B$ is a $\cal C$-basic
set of $G$, there are integers $a_{\chi,\psi}$ ($\psi \in B$) such that
$$\chi^{\cal C}=\sum_{\psi\in B}a_{\chi,\psi}\,\psi^{\cal C}.$$ 
Now take any $\varphi \in \Irr(G)$. If $\varphi\notin b$, then
$\cyc{\chi^{\cal C},\varphi}_G=0$ and $\cyc{\psi^{\cal C},\varphi}_G=0$
for all $\psi \in B\cap b$, so that $\cyc{\chi^{\cal C}-\sum_{\psi\in
B\cap b}a_{\chi,\psi}\,\psi^{\cal C},\varphi}_G=0$. On the other
hand, if $\varphi\in b$, then $\cyc{\psi^{\cal C},\varphi}_G=0$
for all $\psi\notin b$, so that $\cyc{\chi^{\cal C}-\sum_{\psi\in
B\cap b}a_{\chi,\psi}\, \psi^{\cal C},\varphi}_G=\cyc{\chi^{\cal C}-
\sum_{\psi\in B}a_{\chi,\psi}\,\psi^{\cal C},\varphi}_G=0$. Hence we
deduce that
$$\chi^{\cal C}=\sum_{\psi\in B\cap b}a_{\chi,\psi}\,\psi^{\cal C}.$$
Conversely, it is clear that, if $B=\cup_{b\in\operatorname{Bk}_{\cal
C}(G)} B_b$, then $B^{\cal C}$ generates $\Irr(G)^{\cal C}$ over $\Z$.
We now show that $B^{\cal C}$ is free. Suppose there are integers
$a_{\psi}$ ($\psi \in B$) such that
$$\sum_{\psi\in B}a_{\psi}\,\psi^{\cal C}=0.$$ 
Let $b$ be a $\cal C$-block of $G$. The same argument as previously
shows that
$$\sum_{\psi\in B\cap b}a_{\psi}\,\psi^{\cal C}=0.$$ 
Since $B\cap b=B_b$ and $B_b^{\cal C}$ is free, we deduce that
$a_{\psi}=0$ for all $\psi \in b$, and thus for all $\psi \in B$.
\end{proof}

Our main tool is provided by the following

\begin{proposition}\label{BSisometrie}
Let $\cal I_{\eta} \colon (b,\cal C)\rightarrow (b',\cal C')$ be a
generalized perfect isometry between two blocks $b$ and $b'$ of finite
groups $G$ and $H$ respectively. Then $B$ is a $\cal C$-basic set of $b$
if and only if the set $B'={\cal I}(B)=\{\eta(\psi)\cal I_{\eta}(\psi) ,
\; \psi\in B\}$ is a $\cal C'$-basic set of $b'$.
\end{proposition}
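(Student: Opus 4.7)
The plan is to push the isometry property of $\cal I_\eta$ through the projection $v \mapsto v^\cal C$, and thereby obtain a $\Z$-module isomorphism between the $\Z$-submodules spanned by $b^\cal C$ and $b'^{\cal C'}$. Extending $\cal I_\eta$ $\C$-linearly to an isomorphism $\C b \to \C b'$ (as noted in the excerpt), I would show that it induces a well-defined $\Z$-module isomorphism $\bar{\cal I}_\eta \colon \cal M \to \cal M'$, where $\cal M$ and $\cal M'$ denote the $\Z$-modules generated by $b^\cal C$ and $b'^{\cal C'}$, sending $\chi^\cal C$ to $\cal I_\eta(\chi)^{\cal C'} = \eta(\chi)\cal I(\chi)^{\cal C'}$. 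Once this is in hand, since $\eta(\chi) = \pm 1$ is a unit of $\Z$, the family $\{\chi^\cal C : \chi \in B\}$ is a $\Z$-basis of $\cal M$ if and only if $\{\cal I(\chi)^{\cal C'} : \chi \in B\}$ is a $\Z$-basis of $\cal M'$, which is precisely the desired equivalence.

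The heart of the argument is the following characterization: for $v \in \C b$, one has $v^\cal C = 0$ if and only if $\cyc{v,\chi}_\cal C = 0$ for every $\chi \in b$. One direction is immediate. For the converse, I would use the identity $\cyc{v,\chi}_\cal C = \cyc{v^\cal C,\chi}_G$ together with the fact that $\Irr(G)$ is an orthonormal basis of the space of class functions, so that $v^\cal C = 0$ is equivalent to the vanishing of $\cyc{v^\cal C,\chi}_G$ for all $\chi \in \Irr(G)$. For $\chi \notin b$, this vanishing is automatic, since $b$ is a $\cal C$-block and hence each of its elements (and so, by linearity, every $v \in \C b$) is orthogonal across $\cal C$ to every irreducible outside $b$. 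The analogous statement holds on the $H$-side with $\cal C'$ and $b'$.

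Combined with the isometry property $\cyc{v,w}_\cal C = \cyc{\cal I_\eta(v),\cal I_\eta(w)}_{\cal C'}$ for all $v,w \in \C b$, and with the fact that $\cal I_\eta$ is a $\C$-linear bijection between $\C b$ and $\C b'$, this characterization yields $v^\cal C = 0 \Leftrightarrow \cal I_\eta(v)^{\cal C'} = 0$ for every $v \in \Z b$. Consequently, the $\Z$-linear map $\Z b \to \cal M'$ given by $v \mapsto \cal I_\eta(v)^{\cal C'}$ factors through $\cal M$ and provides the required isomorphism $\bar{\cal I}_\eta$. The main obstacle is precisely this kernel characterization: it is where the hypothesis that $b$ is a full $\cal C$-block (and not an arbitrary subset of $\Irr(G)$) enters crucially, since otherwise the vanishing of $v^\cal C$ could not be detected using only pairings with characters in $b$, and the isometry alone would not suffice to identify the two kernels.
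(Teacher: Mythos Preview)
Your proposal is correct and rests on the same two ingredients as the paper's proof: the block orthogonality (to reduce the detection of $v^{\cal C}$ to pairings with characters in $b$) and the isometry (to transport these pairings to the $H$-side). The organization differs slightly. The paper establishes the explicit identity $\cal I_\eta(\chi^{\cal C}) = \cal I_\eta(\chi)^{\cal C'}$ by expanding $\chi^{\cal C} = \sum_{\psi \in b} \cyc{\chi,\psi}_{\cal C}\,\psi$ (this is where the block hypothesis enters, exactly as in your kernel characterization), applying $\cal I_\eta$ linearly, replacing each coefficient $\cyc{\chi,\psi}_{\cal C}$ by $\cyc{\cal I_\eta(\chi),\cal I_\eta(\psi)}_{\cal C'}$, and recognizing the result as $\cal I_\eta(\chi)^{\cal C'}$; it then checks generation and freeness by hand. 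Your version packages the same computation as a kernel argument: you characterize $\{v \in \C b : v^{\cal C}=0\}$ via the form $\cyc{\,\cdot\,,\,\cdot\,}_{\cal C}$ restricted to $b$, use the isometry to match kernels, and deduce that $\chi^{\cal C} \mapsto \cal I_\eta(\chi)^{\cal C'}$ is a well-defined $\Z$-module isomorphism $\cal M \to \cal M'$. The paper's identity is precisely your $\bar{\cal I}_\eta$ written out on generators; conversely, your kernel criterion is the abstract reason the paper's expansion of $\chi^{\cal C}$ stays inside $\C b$. Neither approach gains or loses anything substantive over the other.
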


\begin{proof}
Take any $\chi\in b$. We have $$
\chi^{\cal
C}=\displaystyle{\sum_{\psi\in\Irr(G)}\cyc{\chi^{\cal
C},\psi}_{G}\,\psi} = \displaystyle{\sum_{\psi\in\Irr(G)}\cyc{\chi,\psi}_{\cal C}\,\psi} = \displaystyle{\sum_{\psi\in b}\cyc{\chi,\psi}_{\cal C}\,\psi}
=\displaystyle{\sum_{\psi\in b}\cyc{\chi^{\cal C},\psi}_{G}\,\psi}.
$$
It follows that
$$\begin{array}{lcl}
\cal I_{\eta}(\chi^{\cal
C})&=&\displaystyle{\cal I_{\eta}\left(\sum_{\psi\in b}\cyc{\chi^{\cal
C},\psi}_{G}\psi\right)}\\
&=&\displaystyle{\sum_{\psi\in b}\cyc{\chi^{\cal
C},\psi}_{G}\,\cal I_{\eta}(
\psi)}\\
&=&\displaystyle{\sum_{\psi\in b}\cyc{\chi
,\psi}_{\cal C}\,\cal I_{\eta}(
\psi)}\\
&=&\displaystyle{\sum_{\psi\in b}\cyc{\cal I_{\eta}(\chi)
,\cal I_{\eta}(\psi)}_{\cal C'}\,\cal I_{\eta}(
\psi)}\\
&=&\displaystyle{\sum_{\psi\in b}\cyc{\cal I_{\eta}(\chi)
,\eta(\psi)\cal I(\psi)}_{\cal C'}\,\eta(\psi)\,\cal I(
\psi)}\\
&=&\displaystyle{\sum_{\psi'\in b'}\cyc{\cal I_{\eta}(\chi)
,\psi'}_{\cal C'}\,
\psi'}\\
&=&\displaystyle{\sum_{\psi'\in b'}\cyc{\cal I_{\eta}(\chi)^{\cal C'}
,\psi'}_{H}\,
\psi'}\\
&=&\displaystyle{\sum_{\psi'\in \Irr(H)}\cyc{\cal I_{\eta}(\chi)^{\cal C'}
,\psi'}_{H}\,
\psi'}\\
&=&\cal I_{\eta}(\chi)^{\cal C'}
\end{array}
$$

We now suppose that $B$ is a $\cal C$-basic set of $b$. Take
any $\theta\in b'$. There exists $\chi\in b$ such that $\cal
I_{\eta}(\chi)=\eta(\chi)\theta$. Since $B$ is a $\cal C$-basic set
of $b$, there are integers $a_{\chi,\psi}$ ($\psi \in B$) such that
$$\chi^{\cal C}=\sum_{\psi\in B}a_{\chi,\psi}\psi^{\cal C}.$$
Using the computation above, we deduce that
$$\eta(\chi)\theta^{\cal C'}={\cal I}_{\eta}(\chi^{\cal C})=\sum_{\psi\in B}a_{\chi,\psi}\cal I_{\eta}(\psi^{\cal C})=\sum_{\psi\in
B}\underbrace{a_{\chi,\psi}\eta(\psi)}_{\in
\Z}\ \underbrace{\cal I(\psi)^{\cal
C'}}_{\in B'^{\cal C'}}.$$
Hence the family $B'^{\cal C'}$ generates $b'^{\cal C'}$ over $\Z$.
We now prove that this family is free. Suppose there are integers
$b_{\psi}$ ($\psi \in B$) such that 
$$\sum_{\psi\in B}b_{\psi} \, \cal I(\psi)^{\cal C'}=\sum_{\psi\in
B}b_{\psi}\,\eta(\psi)\,\cal I_{\eta}(\psi)^{\cal C'}=0.$$
Then we have
$$\cal I_{\eta}\left(\sum_{\psi\in B}b_{\psi}\, \eta(\psi) \, 
\psi^{\cal C}\right)=0.$$ 
Since $\cal I_{\eta}$ is an isomorphism, its kernel is trivial, so that
$$\sum_{\psi\in B}(b_{\psi}\, \eta(\psi) )\, \psi^{\cal C}=0.$$
Now, using the fact that $B$ is a $\cal C$-basic set of $b$, we
deduce that, for all $\psi \in B$, $ b_{\psi} \eta( \psi)=0$, and
thus $b_{\psi}=0$. Hence $B'^{\cal C'}$ is free, and $B'$ is a $\cal
C'$-basic set of $b'$.

\end{proof}

\section{Generalized perfect isometry}\label{part3}

In this section, we present the generalized perfect isometry we are
going to use to reduce our problem from the symmetric group to a wreath
product.

\subsection{Some results on partitions}

Throughout this section, we fix $n \in \mathbb{N}$, $p$ an odd
prime, and $b$ a $p$-block of the symmetric group $\sym_n$. We write
$\Irr(\sym_n)=\{ \chi_{\lambda}, \, \lambda \vdash n \}$ as above.
By the Nakayama Conjecture (cf \cite[6.1.21]{James-Kerber}), the irreducible
complex characters in $b$ are labelled by the partitions of $n$ with
a given $p$-core, $\gamma$ say. In particular, we can consider the
{\emph{weight}} $w$ of $b$, that is the $p$-weight of any partition of
$n$ labelling some irreducible character in $b$. The characters in $b$
can be parametrized by the set of $p${\emph{-tuples of partitions}} of
$w$, using the abacus. Since we will use this description later on,
we present it here on an example. For a complete study, we refer to
\cite[\S2.7]{James-Kerber} (note however that the abacus we describe
here is the horizontal mirror image of that used by James and Kerber).

Consider the lower-right quarter plane (see below). We cut the
axes in segments of length 1, and label these by the integers.
We choose an arbitrary segment on the vertical axis to be the
{\emph{origin}}, indicated by the symbol $\triangleright$. Whenever we
say {\emph{symmetric}} (respectively reflection), we mean symmetric with
respect to (or reflection against) the diagonal $(\Delta)$. We label by
$\triangledown$ the reflection of the origin (this will play a role in
the proof of Lemma \ref{conjugationabacus}).

\setlength{\unitlength}{1mm}

\begin{center}

\begin{picture}(44,47)

\linethickness{0.2mm}

\put(0,10) {\line(0,1){30}}

\put(0,40) {\line(1,0){30}}

\dottedline{1}(0,0)(0,10)

\dottedline{1}(30,40)(40,40)

\put(0,12){\line(1,0){1}}

\put(0,16){\line(1,0){1}}

\put(0,20){\line(1,0){1}}

\put(0,24){\line(1,0){1}}

\put(0,28){\line(1,0){1}}

\put(0,32){\line(1,0){1}}

\put(0,36){\line(1,0){1}}

\put(4,39){\line(0,1){1}}

\put(8,39){\line(0,1){1}}

\put(12,39){\line(0,1){1}}

\put(16,39){\line(0,1){1}}

\put(20,39){\line(0,1){1}}

\put(24,39){\line(0,1){1}}

\put(28,39){\line(0,1){1}}

\put(-5,14){$_{-7}$}

\put(-5,18){$_{-6}$}

\put(-8,17){$\triangleright$}

\put(-5,22){$_{-5}$}

\put(-5,26){$_{-4}$}

\put(-5,30){$_{-3}$}

\put(-5,34){$_{-2}$}

\put(-5,38){$_{-1}$}

\put(1, 42){$_{0}$}

\put(5, 42){$_{1}$}

\put(9, 42){$_{2}$}

\put(13, 42){$_{3}$}

\put(17, 42){$_{4}$}

\put(21, 42){$_{5}$}

\put(25, 42){$_{6}$}

\put(21,44){$\triangledown$}

\dashline[+60]{2}(0,40)(28,12)

\put(30,12){$(\Delta)$}

\end{picture}

\end{center}

Now take for example $p=3$ and the partition $\lambda=(4,4,4,3,2)$ of
$n=17$. We put the Young diagram of $\lambda$ in the upper-left corner
of the quarter plane, and consider its (infinite) rim, in bold below.

\setlength{\unitlength}{1mm}

\begin{center}

\begin{picture}(41,41)

\linethickness{0.2mm}

\put(0,10) {\line(0,1){30}}

\put(0,40) {\line(1,0){30}}

\put(0,12){\line(1,0){1}}

\put(0,16){\line(1,0){1}}

\put(20,39){\line(0,1){1}}

\put(24,39){\line(0,1){1}}

\put(28,39){\line(0,1){1}}

\put(4,20){\line(0,1){20}}

\put(8,24){\line(0,1){16}}

\put(12,28){\line(0,1){12}}

\put(0,36){\line(1,0){16}}

\put(0,32){\line(1,0){16}}

\put(0,28){\line(1,0){12}}

\put(0,24){\line(1,0){8}}

\linethickness{0.5mm}

\put(0,20) {\line(1,0){8}}

\put(8,24) {\line(1,0){4}}

\put(12,28) {\line(1,0){4}}

\put(16,40) {\line(1,0){14}}

\put(0,10){\line(0,1){10}}

\put(16,28){\line(0,1){12}}

\put(12,24){\line(0,1){4}}

\put(8,20){\line(0,1){4}}

\dashline[+60]{2}(30,40)(40,40)

\dashline[+60]{2}(0,0)(0,10)

\put(-5,14){$_{-7}$}

\put(-5,18){$_{-6}$}

\put(-8,17){$\triangleright$}

\put(0,18){$_{-5}$}

\put(4,18){$_{-4}$}

\put(9,22){$_{-3}$}

\put(8,26){$_{-2}$}

\put(13,26){$_{-1}$}

\put(14, 30){$_{0}$}

\put(17, 30){$_{1}$}

\put(17, 34){$_{2}$}

\put(17, 38){$_{3}$}

\put(17, 42){$_{4}$}

\put(21, 42){$_{5}$}

\put(25, 42){$_{6}$}

\end{picture}

\end{center}

We see the rim as an infinite sequence of vertical and horizontal dashes
of length 1, which, like the axes, we can label by the integers, in such
a way that both labellings coincide whenever they meet. In particular,
whichever partition we take, exactly one dash in the rim is labelled
by the same integer as the origin segment. We say that this dash is
{\emph{labelled by the origin}}, and call it {\emph{origin dash}}. We
can now construct the abacus of $\lambda$ using this sequence of dashes.
We put beads on $p=3$ runners, going from bottom to top and left to
right, putting a bead for each vertical dash and an empty spot for each
horizontal one. For this construction to be uniquely defined, we put the
bead or empty spot corresponding to the origin dash on the first runner.
We get

\setlength{\unitlength}{1mm}

\begin{center}

\begin{picture}(24,43)

\put(2,9) {\line(0,1){30}}

\put(12,9) {\line(0,1){30}}

\put(22,9) {\line(0,1){30}}

\put(2,12){\circle*{2}}

\put(12,12){\circle*{2}}

\put(22,12){\circle*{2}}

\put(2,16){\circle*{2}}

\put(12,16){\circle*{2}}

\put(22,16){\circle*{2}}

\put(2,20){\circle*{2}}

\put(11,20){\line(1,0){2}}

\put(2,24){\circle*{2}}

\put(22,24){\circle*{2}}

\put(2,32){\circle*{2}}

\put(11,32){\line(1,0){2}}

\put(1,36){\line(1,0){2}}

\put(1,28) {\line(1,0){2}}

\put(12,28) {\circle*{2}}

\put(22,28) {\circle*{2}}

\put(11,24) {\line(1,0){2}}

\put(21,20) {\line(1,0){2}}

\put(11,36) {\line(1,0){2}}

\put(21,36) {\line(1,0){2}}

\put(21,32) {\line(1,0){2}}

\dottedline{1}(2,38)(2,43)

\dottedline{1}(12,38)(12,43)

\dottedline{1}(22,38)(22,43)

\dottedline{1}(12,0)(12,9)

\dottedline{1}(2,0)(2,9)

\dottedline{1}(22,0)(22,9)

\put(-3,19){$\triangleright$}

\end{picture}

\end{center}

We see that we thus have a bijection which, to a partition, associates
its abacus. If we change the origin, we also change the bijection.
But the bijection only depends on the value modulo $p$ of the integer
labelling the origin segment. Note that we also get these $p$ different
bijections if we fix the origin, but make $p$ different constructions by
changing the runner on which we store the origin dash.

The $p$-information is visible in the abacus in a natural way. For
any positive integer $m$, we call $m${\emph{-hook}} (respectively
$(m)$-{\emph{hook}}) in a partition any hook of length $m$ (respectively
divisible by $m$). If $k$ is a positive integer, then any $kp$-rim-hook
in $\lambda$ corresponds to a bead in the abacus which lies, on the same
runner, $k$ places above an empty spot. Moreover, this correspondence is
bijective.

The removal of a $kp$-rim-hook in the Young diagram is achieved by
moving the corresponding bead down to the empty spot.

By removing all the $p$-hooks from its Young diagram (which is
equivalent to removing all its $p$-rim-hooks), we get the $p$-core
$\gamma(\lambda)$ of $\lambda$, and the corresponding abacus:

\begin{center}

\begin{picture}(75,42)

\put(0,8) {\line(0,1){32}}

\put(-3,17) {$\triangleright$}

\dottedline{1}(0,8)(0,4)

\put(0,40) {\line(1,0){16}}

\dottedline{1}(16,40)(22,40)

\put(0,36) {\line(1,0){4}}

\put(0,32) {\line(1,0){4}}

\put(4,32){\line(0,1){8}}

\put(0,12) {\line(1,0){1}}

\put(0,16) {\line(1,0){1}}

\put(0,20) {\line(1,0){1}}

\put(0,24) {\line(1,0){1}}

\put(0,28) {\line(1,0){1}}

\put(8,39) {\line(0,1){1}}

\put(12,39) {\line(0,1){1}}

\put(16,39) {\line(0,1){1}}

\put(55,9) {\line(0,1){29}}

\put(65,9) {\line(0,1){29}}

\put(75,9) {\line(0,1){29}}

\put(55,12){\circle*{2}}

\put(65,12){\circle*{2}}

\put(75,12){\circle*{2}}

\put(55,16){\circle*{2}}

\put(65,16){\circle*{2}}

\put(75,16){\circle*{2}}

\put(55,20){\circle*{2}}

\put(65,20){\circle*{2}}

\put(75,20){\circle*{2}}

\put(55,24){\circle*{2}}

\put(64,24){\line(1,0){2}}

\put(54,32){\line(1,0){2}}

\put(55,28){\circle*{2}}

\put(54,36) {\line(1,0){2}}

\put(64,28) {\line(1,0){2}}

\put(74,28) {\line(1,0){2}}

\put(75,24) {\circle*{2}}

\put(74,36) {\line(1,0){2}}

\put(64,36) {\line(1,0){2}}

\put(74,32) {\line(1,0){2}}

\put(64,32) {\line(1,0){2}}

\dottedline{1}(55,38)(55,42)

\dottedline{1}(65,38)(65,42)

\dottedline{1}(75,42)(75,38)

\dottedline{1}(55,4)(55,9)

\dottedline{1}(65,4)(65,9)

\dottedline{1}(75,4)(75,9)

\put(50,19){$\triangleright$}

\end{picture}
\end{center}

To each runner of the abacus, we associate a partition as follows. When
we move all the beads as far down as possible, we get one part for each
bead that we move, of length the number of places the bead goes down.
The resulting $p$-tuple $\alpha_{\lambda}=(\lambda^{(1)}, \, \ldots , \,
\lambda^{(p)})$ of partitions is the $p${\emph{-quotient}} of $\lambda$.
In the above example, we get $\alpha_{\lambda}=(\lambda^{(1)}, \,
\lambda^{(2)} , \, \lambda^{(3)})=((1),(2),(1,1))$. The lengths of the
$\lambda^{(i)}$'s add up to the $p$-weight $w$ of $\lambda$. We write
this as $\alpha_{\lambda} \Vdash w$. For any positive integer $k$,
we call $k$-hook in the quotient of $\lambda$ any $k$-hook in one of
the $\lambda^{(i)}$'s. Then there is a bijection between the set of
$k$-hooks in the $p$-quotient of $\lambda$ and the set of $kp$-hooks in
$\lambda$.

One sees easily that the partition $\lambda$ is uniquely determined by
its $p$-core and $p$-quotient.

\medskip
We now want to study the effect of conjugation on these descriptions.
This is described by the following

\begin{lemma}\label{conjugationabacus}
Conjugation of partitions induces a permutation of order 2 on the
runners of the $p$-abacus, which, up to the choice of the origin,
is the reflection against the middle runner. With such a choice of
origin, if $\lambda$ has $p$-quotient $\alpha_{\lambda}=(\lambda^{(1)},
\, \ldots , \, \lambda^{(p)} )$, then $\lambda^*$ has $p$-quotient
$\alpha_{\lambda^*}=(\lambda^{(p)^*} , \, \ldots , \, \lambda^{(1)^*}
)$.
\end{lemma}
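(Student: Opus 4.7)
My plan is to translate conjugation of partitions into the bead/empty-spot language of the abacus in three steps. First, I would observe that conjugation is realised geometrically by reflecting the Young diagram across the diagonal $(\Delta)$. This reflection sends the rim of $\lambda$ to the rim of $\lambda^*$, and since it swaps horizontal and vertical directions, it turns each horizontal dash into a vertical one and vice versa. Reading the rim as a bi-infinite sequence of dashes, the rim of $\lambda^*$ is therefore the reverse of the rim of $\lambda$ with beads and empty spots interchanged.

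Next, I would keep track of the labels. By construction the dashes inherit their labels from the axes, and inspection of the picture shows that reflection across $(\Delta)$ acts on labels by $i \mapsto -i-1$ (for instance $\triangleright$ at label $-6$ is sent to $\triangledown$ at label $5$). With the convention that the origin dash sits on the first runner, the runner of a given dash depends only on its label modulo $p$. Choosing the origin at a label $\equiv 0 \pmod{p}$, the involution $i \mapsto -i-1$ then sends the residue class of runner $r$ to that of runner $p+1-r$, which is the reflection of runners through the middle runner $(p+1)/2$; this middle runner is an integer precisely because $p$ is odd. A different origin would produce a different involution of runners, but only for origin labels $\equiv 0 \pmod p$ does it become the mirror symmetry stated in the lemma, which is what the clause ``up to the choice of origin'' captures.

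Finally, I would analyse the effect on a single runner. Restricted to the positions belonging to runner $r$, the operation ``reverse, then swap beads with empty spots'' is exactly the boundary description of conjugation for a one-runner abacus: such a runner encodes the partition $\lambda^{(r)}$ via the lattice path read off its sequence of beads and gaps, and reversing the path while exchanging horizontal and vertical steps yields the path for $\lambda^{(r)^*}$. Combining with the previous step, runner $r$ of the abacus of $\lambda$ is carried to runner $p+1-r$ of the abacus of $\lambda^*$, and the partition stored there is $\lambda^{(r)^*}$; this yields $\alpha_{\lambda^*} = (\lambda^{(p)^*}, \lambda^{(p-1)^*}, \ldots, \lambda^{(1)^*})$. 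The main obstacle is bookkeeping: pinning down an origin so that reflection across $(\Delta)$ simultaneously realises both the mirror of the runners and, on each individual runner, the standard reverse-and-swap description of conjugation of partitions.
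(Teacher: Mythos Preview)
Your argument is correct and follows essentially the same route as the paper's proof: both recognise that conjugation swaps vertical and horizontal dashes in the rim (beads~$\leftrightarrow$~empty spots) and reverses their order, then identify the origin condition under which the induced permutation of runners is the reflection $r\leftrightarrow p+1-r$, and finally observe that on each runner this ``swap and reverse'' yields the conjugate of the stored partition. The only cosmetic difference is that the paper describes the operation as ``swap beads, then rotate the abacus $180^{\circ}$'' and phrases the origin condition as ``distance between $\triangleright$ and $\triangledown$ is $\equiv -1 \pmod p$'', whereas you work directly with the label involution $i\mapsto -i-1$ and phrase the condition as ``origin at a label $\equiv 0 \pmod p$''; a one-line check shows these are equivalent.
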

\begin{proof}
Take any partition $\lambda$. Throughout the proof, we use $\lambda$
as above as an example. The rim of $\lambda^*$ is visible in the Young
diagram of $\lambda$: we just need to exchange the roles of horizontal
and vertical dashes in the rim of $\lambda$. In terms of abacus, this
means replacing each bead in the abacus of $\lambda$ by an empty spot,
and each empty spot by a bead. This corresponds to transforming the
partition stored on each runner into its conjugate. If we then turn this
new abacus upside down (that is, rotate it 180 degrees), we obtain {\bf
an} abacus which represents $\lambda^*$. In our example, we get

\setlength{\unitlength}{1mm}

\begin{center}

\begin{picture}(100,43)

\put(2,9) {\line(0,1){30}}

\put(12,9) {\line(0,1){30}}

\put(22,9) {\line(0,1){30}}

\put(1,12){\line(1,0){2}}

\put(11,12){\line(1,0){2}}

\put(21,12){\line(1,0){2}}

\put(1,16){\line(1,0){2}}

\put(11,16){\line(1,0){2}}

\put(21,16){\line(1,0){2}}

\put(1,20){\line(1,0){2}}

\put(12,20){\circle*{2}}

\put(1,24){\line(1,0){2}}

\put(21,24){\line(1,0){2}}

\put(1,32){\line(1,0){2}}

\put(12,32){\circle*{2}}

\put(2,36){\circle*{2}}

\put(2,28) {\circle*{2}}

\put(11,28) {\line(1,0){2}}

\put(21,28) {\line(1,0){2}}

\put(12,24) {\circle*{2}}

\put(22,20) {\circle*{2}}

\put(12,36) {\circle*{2}}

\put(22,36) {\circle*{2}}

\put(22,32) {\circle*{2}}

\dottedline{1}(2,38)(2,43)

\dottedline{1}(12,38)(12,43)

\dottedline{1}(22,38)(22,43)

\dottedline{1}(12,0)(12,9)

\dottedline{1}(2,0)(2,9)

\dottedline{1}(22,0)(22,9)

\put(-3,19){$\triangleright$}

\put(32,26){$\mbox{which then becomes}$}

\put(72,9) {\line(0,1){30}}

\put(82,9) {\line(0,1){30}}

\put(92,9) {\line(0,1){30}}

\put(72,12){\circle*{2}}

\put(82,12){\circle*{2}}

\put(92,12){\circle*{2}}

\put(72,16){\circle*{2}}

\put(82,16){\circle*{2}}

\put(92,16){\circle*{2}}

\put(72,20){\circle*{2}}

\put(82,20){\circle*{2}}

\put(91,20) {\line(1,0){2}}

\put(71,24){\line(1,0){2}}

\put(81,24) {\line(1,0){2}}

\put(92,24){\circle*{2}}

\put(71,28) {\line(1,0){2}}

\put(82,28) {\circle*{2}}

\put(91,28) {\line(1,0){2}}

\put(72,32){\circle*{2}}

\put(82,32){\circle*{2}}

\put(91,32) {\line(1,0){2}}

\put(71,36){\line(1,0){2}}

\put(81,36) {\line(1,0){2}}

\put(91,36) {\line(1,0){2}}

\dottedline{1}(72,38)(72,43)

\dottedline{1}(82,38)(82,43)

\dottedline{1}(92,38)(92,43)

\dottedline{1}(82,0)(82,9)

\dottedline{1}(72,0)(72,9)

\dottedline{1}(92,0)(92,9)

\put(94,31){$\triangledown$}

\end{picture}

\end{center}

However, this is {\bf the} abacus of $\lambda^*$ (in the bijection
we mentionned before) only if the position labelled by the origin
$\triangleright$ is on the first runner. The spot that was labelled by
$\triangleright$ in the abacus of $\lambda$ is now on the last runner,
and labelled by the symmetrized $\triangledown$ of $\triangleright$ on
the horizontal axis. Hence the spot now labelled by $\triangleright$
is on the first runner if and only if the number of spots which
separate it from the spot labelled by $\triangledown$ is $p-1$ plus a
multiple of $p$. But this number is exactly the distance, following
the axes, between $\triangledown$ and $\triangleright$. Hence our
construction gives {\bf the} abacus of $\lambda^*$ if and only if
this distance is congruent to $-1$ modulo $p$ (as is the case in
our example, since $11 = -1 \; (mod \; 3)$). If this is the case,
then, from the description we gave above, we see immediately that,
if $\lambda$ has $p$-quotient $\alpha_{\lambda}=(\lambda^{(1)}, \,
\ldots , \, \lambda^{(p)} )$, then $\lambda^*$ has $p$-quotient
$\alpha_{\lambda^*}=(\lambda^{(p)^*} , \, \ldots , \, \lambda^{(1)^*} )$
(in our example, we get that $\alpha_{\lambda^*}=(\lambda^{(3)^*}, \,
\lambda^{(2)^*} , \, \lambda^{(1)^*})=((2),(1,1),(1))$).
\end{proof}

This leads to the following

\begin{convention}\label{convention}
We choose the origin so that the permutation of Lemma
\ref{conjugationabacus} is the reflection against the $r$-th runner,
where $r = (p+1)/2$.
\end{convention}

\begin{remark}
Note that, with Convention \ref{convention}, applying the transformation
described in the proof of Lemma \ref{conjugationabacus} to the abacus
of $\lambda$ always gives the abacus of $\lambda^*$. However, it
corresponds to $(\gamma(\lambda), \, \alpha_{\lambda}) \longmapsto
(\gamma(\lambda)^*, \, \alpha_{\lambda^*})$, and it's easy to see
that $\gamma(\lambda)^* = \gamma(\lambda^*)$. In particular, this
transformation needs not preserve the core $\gamma(\lambda)$, and
thus the block in which lies the character $\chi_{\lambda}$. If
$\gamma(\lambda) \neq \gamma(\lambda^*)$, then there still is a
character $\chi_{\mu}$ in the same $p$-block as $\chi_{\lambda}$ and
such that $\alpha_{\mu}=\alpha_{\lambda^*}$, but $\mu \neq \lambda^*$.
\end{remark}

We also include here a lemma about the partition $\overline{\lambda}$
defined in Section \ref{intro} for a self-conjugate partition, and which
will be useful later.

\begin{lemma}\label{preg}
Let $p$ be an odd prime, $r = (p+1)/2$, and let $\lambda$ be a
self-conjugate partition of $n$. Then the partition $\overline{\lambda}$
is $p$-regular, if and only if the $r$-th part of the quotient
$\alpha_{\lambda}$ of $\lambda$ is empty.
\end{lemma}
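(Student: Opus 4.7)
The plan is to translate both conditions into the language of beta-numbers and the $p$-abacus. I will fix a charge $N$ (large enough, and chosen modulo $p$ so that Convention~\ref{convention} holds) and write $\beta_i = \lambda_i + N - i$ for the beta-numbers of $\lambda$. Under this convention, the abacus realization of conjugation is the involution $\sigma \colon b \mapsto (2N-1)-b$, and its fixed runner is exactly runner $r$; equivalently, runner $r$ consists of the positions $b$ with $2b \equiv 2N-1 \pmod{p}$.

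The next step is the identity $2\beta_i - (2N-1) = 2\lambda_i - 2i + 1$. It tells us that the parts of $\overline{\lambda}$ are precisely the integers $2\beta_i - (2N-1)$ for $1 \leq i \leq k$, where $k$ is the Durfee size of $\lambda$, and that these indices are characterized by the inequality $\beta_i \geq N$. Such a part is divisible by $p$ if and only if $2\beta_i \equiv 2N-1 \pmod{p}$, that is, if and only if the bead $\beta_i$ lies on runner $r$. Hence $\overline{\lambda}$ fails to be $p$-regular if and only if there is a bead on runner $r$ at some position $\geq N$.

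It remains to link this to the condition $\lambda^{(r)} = \emptyset$. Since $\lambda = \lambda^*$, the involution $\sigma$ exchanges beads and empty spots on runner $r$; as $2b = 2N-1$ has no integer solution, $\sigma$ has no fixed position, so the positions of runner $r$ in $\{0, 1, \ldots, 2N-1\}$ partition into $\sigma$-pairs, each containing exactly one bead and one empty spot. A bead on runner $r$ at position $\geq N$ is paired with an empty spot at position $< N$ on runner $r$, so it witnesses a bead strictly above an empty spot on that runner and hence $\lambda^{(r)} \neq \emptyset$; conversely, if no such bead exists, the pairing forces every position $<N$ on runner $r$ to be a bead and every position $\geq N$ to be empty, so the beads on runner $r$ form a bottom segment and $\lambda^{(r)} = \emptyset$. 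The only delicate point in this plan is to verify that Convention~\ref{convention} really corresponds to identifying runner $r$ with the residue class $\{b : 2b \equiv 2N-1 \pmod{p}\}$; this pins down $N$ modulo $p$ and is the place where the specific value $r = (p+1)/2$ enters the argument.
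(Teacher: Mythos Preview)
Your proof is correct and follows the same route as the paper: both work on the $p$-abacus under Convention~\ref{convention}, identify runner $r$ as the fixed runner of the conjugation involution, observe that a part of $\overline{\lambda}$ is divisible by $p$ exactly when the corresponding bead sits on runner $r$ (in the ``upper half'' $\geq N$), and then use self-conjugacy to link this to $\lambda^{(r)}\neq\emptyset$. The paper phrases the argument pictorially in terms of symmetric versus diagonal $(p)$-hooks, while you carry it out in explicit beta-number coordinates; if anything, your $\sigma$-pairing argument in the last paragraph makes the final equivalence (which the paper compresses into ``we easily get the result'') more transparent.
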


\begin{proof}
A hook in the Young diagram of a partition is called {\emph{diagonal}}
if its top left box lies on the diagonal $(\Delta)$. For any
self-conjugate $\lambda \vdash n$, the parts of $\overline{\lambda}$ are
therefore exactly the lengths of the diagonal hooks in $\lambda$.

Now, by conjugation, any $(p)$-hook stored on the $i$-th runner of the
abacus of (any) $\lambda$ is tranformed into its reflection, and is
stored on the $(p+1-i)$-th runner in the abacus of $\lambda^*$. Thus the
hooks on the $i$-th runner of $\lambda$ are exactly the reflections of
those on the $(p+1-i)$-th runner of $\lambda^*$.

In particular, if $\lambda$ is self-conjugate, then
$\alpha_{\lambda}=\alpha_{\lambda^*}$, and each of the hooks on the
$r$-th runner is its own reflection (and all the others move). Hence the
$r$-th runner in the abacus of $\lambda$ stores exactly the symmetric
$(p)$-hooks, which are precisely the diagonal $(p)$-hooks. Since these
correspond to the parts divisible by $p$ in $\overline{\lambda}$, we
easily get the result.

\smallskip
\noindent
Remark: more generally, even if $\lambda$ is not self-conjugate, one can
prove that the $r$-th runner in the abacus of $\lambda$ stores exactly
the diagonal $(p)$-hooks of $\lambda$.

\end{proof}

\begin{remark}\label{remw=0}
In particular, if $w=0$, then $\lambda$ has no $(p)$-hook (and, a
fortiori, no diagonal $(p)$-hook), so that $\overline{\lambda}$ is
always $p$-regular.
\end{remark}

\subsection{Wreath product}\label{wp}

We denote by $\mathbb{Z}_p$ and $\mathbb{Z}_{p-1}$ the cyclic groups
of order $p$ and $p-1$ respectively. Note that $\mathbb{Z}_{p-1} \cong
\Aut(\mathbb{Z}_p)$, so that we can construct the semidirect product $N
= \mathbb{Z}_p \rtimes \mathbb{Z}_{p-1} $. Considering $\mathbb{Z}_p$
as the subgroup of $\sym_p$ generated by a $p$-cycle, we then have $N
= N_{\sym_p}(\mathbb{Z}_p)$. We denote by $G_{p,w}$ the wreath product
$N \wr \sym_w$. That is, $G_{p,w}$ is the semidirect product $N^w
\rtimes \sym_w$, where $\sym_w$ acts by permutation on the $w$ copies
of $N$. Note that, if $w <p$, then $P \cong \mathbb{Z}_p^w$ is a Sylow
$p$-subgroup of $\sym_{pw}$, and $G_{p,w} \cong N_{\sym_{pw}}(P)$.

For a complete description of wreath products and their representations,
we refer to \cite[Chapter 4]{James-Kerber}.

We write $r = (p+1)/2$. We have $\Irr(N)=\{ \psi_1, \, \ldots , \,
\psi_p \}$, with $\psi_i(1)=1$ for $1 \leq i \leq p$ and $i \neq r$,
and $\psi_r(1)=p-1$. More precisely, writing $1_{\mathbb{Z}_p}$ for the
trivial character of $\mathbb{Z}_p$, we have
$$
\Ind_{\mathbb{Z}_p}^N(1_{\mathbb{Z}_p})= \displaystyle \sum_{
\substack{ i=1 \\ i \neq r}}^p \psi_i \; \; \mbox{and} \; \;
\Res_{\mathbb{Z}_p}^N(\psi_i)= 1_{\mathbb{Z}_p} \; \; (1 \leq i \leq p,
\, i\neq r),
$$
and
$$
\Res_{\mathbb{Z}_p}^N(\psi_r)= \eta_2 + \, \cdots \, + \eta_p \; \;
\mbox{and} \; \; \Ind_{\mathbb{Z}_p}^N(\eta_i)= \psi_r \; \; (2 \leq i
\leq p),
$$
where $\{ \eta_2 , \, \ldots , \, \eta_p \} = \Irr (\mathbb{Z}_p)
\setminus \{ 1_{\mathbb{Z}_p} \}$.

\smallskip
The irreducible complex characters of $G_{p,w}=N \wr \sym_w$ are
parametrized by the $p$-tuples of partitions of $w$ as follows. Take
any $\alpha=(\alpha^1, \, \ldots , \, \alpha^p) \Vdash w$ and consider
the irreducible character $\prod_{i=1}^p \psi_i^{|\alpha^i|}$ of the
{\emph{base group}} $N^w$. It can be extended in a natural way to its
inertia subgroup $N \wr \sym_{|\alpha^1|} \times \cdots \times N \wr
\sym_{|\alpha^p|}$, giving the irreducible character $\prod_{i=1}^p
\widetilde{\psi_i^{|\alpha^i|}}$ \cite[p.\,154]{James-Kerber}. Any
extension is of the form $\prod_{i=1}^p (\widetilde{\psi_i^{|\alpha^i|}}
\otimes \varphi_{\alpha_i})$, with $\varphi_{\alpha_i} \in
\Irr(\sym_{|\alpha^i|})$ ($1 \leq i \leq p$). Then $\chi^{\alpha}:=
\Ind_{\scriptstyle \prod_{i=1}^p N \wr \sym_{|\alpha^i|} }^{N \wr
\sym_w} ( \prod_{i=1}^p \widetilde{\psi_i^{|\alpha^i|}} \otimes
\varphi_{\alpha^i} ) \in \Irr(G_{p,w})$. Different $\alpha \Vdash w$
give different irreducible characters of $G_{p,w}$, and any irreducible
character of $G_{p,w}$ can be obtained in this way.

\smallskip
Note for future reference that, in the above notation, if
$\alpha^r = \emptyset$, then $P=\mathbb{Z}_p^w \subseteq \ker
(\chi^{\alpha})$. Indeed, for all $1 \leq i \leq p$, $i \neq r$,
we have $ \Res_{\mathbb{Z}_p}^N(\psi_i)= 1_{\mathbb{Z}_p}$; thus
$\Res_{\mathbb{Z}_p^{|\alpha^i|}}^{N^{|\alpha^i|}}(\psi_i^{|\alpha^i|})=
1_{\mathbb{Z}_p^{|\alpha^i|}}$, so that
$\widetilde{\psi_i^{|\alpha^i|}}(g)=\widetilde{\psi_i^{|\alpha^i|}}(1)$
for all $g \in \mathbb{Z}_p^{|\alpha^i|}$,
and $(\widetilde{\psi_i^{|\alpha^i|}} \otimes
\varphi_{\alpha_i})(g)=(\widetilde{\psi_i^{|\alpha^i|}} \otimes
\varphi_{\alpha_i})(1)$ for all $g \in \mathbb{Z}_p^{|\alpha^i|}$. Since
$P \leq N^w \triangleleft N \wr \sym_w$, we easily get that, for all $g
\in P=\mathbb{Z}_p^w$, $\chi^{\alpha}(g)=\chi^{\alpha}(1)$.

\medskip

The conjugacy classes of $G_{p,w}$ are also parametrized by the
$p$-tuples of partitions of $w$. Let $g_1, \, \ldots , \, g_p$ be
representatives for the conjugacy classes of $N$. Note that, when
seen as a subgroup of $\sym_p$, $N$ has a unique conjugacy class of
$p$-cycles, for which we take representative $g_p$. The elements of $G_{p,w}=N
\wr \sym_w$ are of the form $(h, \, \gamma)=((h_1, \, \ldots, \, h_w ) , \,
\sigma)$, with $h_1, \, \ldots, \, h_w \in N$ and $ \sigma \in \sym_w$.
For any such element, and for any $k$-cycle $\kappa=(j, j\kappa, \ldots,
j \kappa^{k-1})$ in $\sigma$, we define the {\emph{cycle product}} of
$(h, \, \sigma)$ and $\kappa$ by
$$
g((h, \, \sigma); \, \kappa)= h_j h_{j \kappa^{-1}} h_{j \kappa^{-2}}
\ldots h_{j \kappa^{-(k-1)}}.
$$
In particular, $g((h, \, \sigma) ; \, \kappa) \in N$. If $\sigma$ has
cycle structure $\pi$ say, then we form $p$ partitions $(\pi_1, \,
\ldots, \, \pi_p)$ from $\pi$ as follows: any cycle $\kappa$ in $\pi$
gives a cycle of the same length in $\pi_i$ if the cycle product $g((h ,
\, \sigma); \, \kappa)$ is conjugate to $g_i$ in $N$. The resulting $p$-tuple
of partitions of $w$ describes the {\emph{cycle structure}} of $(h, \,
\gamma)$. Two elements of $G_{p,w}$ are conjugate if and only if they
have the same cycle structure.


\smallskip
Define ${\cal C}_{\emptyset}$ to be the set of elements of
$G_{p,w}$ with cycle structure $(\pi_1, \, \ldots, \, \pi_p)$ such that
$\pi_p = \emptyset$.

Note that, in the natural embedding of $G_{p,w}$ in $\sym_{p w}$, for
any $k$-cycle $\kappa$ of $\sigma$, each $m$-cycle of the cycle product
$g((h, \, \sigma) ; \, \kappa) \in \sym_p$ corresponds to $gcd(m, \, k)$ cycles of
length $lcm(m, \, k)$ in $(h, \, \sigma) \sym_{pw}$. In particular, if $w < p$,
then ${\cal C}_{\emptyset}$ is the set of $p$-regular elements of $G_{p,w}$.

\subsection{A generalized perfect isometry}

We now come to the generalized perfect isometry presented by the
second auther in \cite{JB}. We first introduce one last piece of
notation (\cite[Prop. 3.8]{JB}). Let $\alpha=(\alpha^1, \,
\ldots , \, \alpha^p) $ be any $p$-tuple of partitions of $w$, and
$r = (p+1)/2$ as before. We define $\tilde{\alpha} \Vdash w$ by
$$\tilde{\alpha}=(\alpha^1, \, \ldots , \, \alpha^{r-1}, \, (\alpha^r)^*,
\, \alpha^{r+1}, \, \ldots , \, \alpha^p),$$ where $^*$ denotes
conjugation of partitions.

Note that $\tilde{ \; }$ is a bijection from $\{ (\alpha^1, \, \ldots
, \, \alpha^p) \Vdash w \}$ onto itself, and is the identity on
$\{ (\alpha^1, \, \ldots , \, \alpha^p) \Vdash w , \, \alpha^r =
\emptyset\}$.

\smallskip
The main result in \cite{JB} can, in our context, be stated as follows

\begin{theorem}{( \cite{JB}, Theorem 4.1)}\label{perfisom}
Let $n \geq 1$ be an integer and $p$ be a prime, and let $b$ be a
$p$-block of weight $w \neq 0$. For any $\chi_{\lambda} \in b$, denote
by $\alpha_{\lambda}$ the $p$-quotient of $\lambda$. Then, with the
above notations, the bijection
$$
\begin{array}{rcl} {\cal J} \colon b & \longrightarrow
& \Irr(G_{p,w}) \\ \chi_{\lambda} & \longmapsto &
\chi^{\tilde{\alpha_{\lambda}}} \end{array}
$$
induces a generalized perfect isometry ${\cal J}_{\eta} \colon (b,
p\textrm{-reg}) \rightarrow (\Irr(G_{p,w}), {\cal C}_{\emptyset})$.
\end{theorem}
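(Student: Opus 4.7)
The plan is to verify the defining condition of a generalized perfect isometry directly: for all $\chi_{\lambda},\chi_{\mu}\in b$ one needs signs $\eta(\chi_{\lambda}),\eta(\chi_{\mu})\in\{\pm 1\}$ satisfying
$$\cyc{\chi_{\lambda},\chi_{\mu}}_{p\textrm{-reg}} = \cyc{\eta(\chi_{\lambda})\chi^{\tilde{\alpha_{\lambda}}},\eta(\chi_{\mu})\chi^{\tilde{\alpha_{\mu}}}}_{\cal C_{\emptyset}}.$$
This is precisely the content of \cite[Theorem~4.1]{JB}, which one may invoke directly. The underlying strategy is as follows.

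First, one writes down the two character formulas to be compared. On the symmetric group side, the Murnaghan--Nakayama rule expresses $\chi_{\lambda}(\sigma)$, for $\sigma\in\sym_n$ of $p$-regular cycle type $\nu\vdash n$, as a signed sum over sequences of rim-hook removals from $\lambda$ with sizes $\nu_1,\nu_2,\ldots$ On the wreath product side, the standard character formula for $N\wr\sym_w$ expresses $\chi^{\tilde{\alpha_{\lambda}}}$ on an element of cycle structure $(\pi_1,\ldots,\pi_{p-1},\emptyset)\in\cal C_{\emptyset}$ as a product of symmetric-group characters indexed by the (twisted) quotient components, evaluated on the $\pi_i$, and weighted by the values $\psi_i(g_j)$.

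Second, one constructs an explicit bijection between the $p$-regular conjugacy classes of $\sym_n$ and the classes of $G_{p,w}$ in $\cal C_{\emptyset}$, matching centralizer orders so that the averaging factors in the two inner products agree. The bijection is transparent on the abacus of $\lambda$: removing all the $p$-rim hooks of $\lambda$ down to the $p$-core $\gamma(\lambda)=\gamma$ contributes a fixed sign depending only on $\gamma$ (the $p$-sign), which one takes as $\eta(\chi_{\lambda})$, and each part of $\nu$ is assigned to one of $p-1$ labels according to which runner, other than the $r$-th, it moves beads on. The $r$-th runner corresponds to the character $\psi_r$ of $N$, of degree $p-1$ and taking value $-1$ on a $p$-cycle; this is precisely what forces the conjugation of the $r$-th component in the definition of $\tilde{\alpha}$, in order to convert sign contributions from $\psi_r$ into degree contributions matching the Murnaghan--Nakayama output on the $\sym_n$ side.

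The hard part is the bookkeeping of signs across all the steps: each rim-hook removal contributes a leg-length sign, and these must combine to factor as $\eta(\chi_{\lambda})\eta(\chi_{\mu})$ times the sign arising from the wreath product formula. Once the class bijection and the sign calculus are in place, the scalar product identity follows by pairing equal contributions class by class, completing the proof as in \cite{JB}.
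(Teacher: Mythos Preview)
Invoking \cite[Theorem~4.1]{JB} is legitimate --- that is exactly what the paper does --- but your sketch of the underlying strategy is not how that proof proceeds, and the direct approach you outline cannot work as stated.

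The central gap is your claimed bijection between the $p$-regular conjugacy classes of $\sym_n$ and the classes of $G_{p,w}$ lying in $\cal C_{\emptyset}$, with matching centralizer orders. No such bijection exists: the classes in $\cal C_{\emptyset}$ are indexed by $(p-1)$-tuples of partitions of $w$, a number depending only on $w$, while the $p$-regular classes of $\sym_n$ depend on $n=|\gamma|+pw$, which varies with the core $\gamma$ of $b$. Blocks of equal weight but different cores live in different symmetric groups, yet are supposed to map to the same $G_{p,w}$. Your assignment rule --- send each part of $\nu$ to the runner on which it moves beads --- is also ill-defined: which runner a rim hook of given length touches depends on the partition $\lambda$, not on the cycle type $\nu$ alone, so the ``bijection'' would change from character to character. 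A class-by-class pairing of character values is therefore impossible here; the generalized perfect isometry is an equality of scalar products, not of individual character values.

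The actual argument has a different architecture. It factors the isometry through the intermediate wreath product $L\wr\sym_w$ with $L=\Z_p$: first the Osima/K\"ulshammer--Olsson--Robinson isometry $(b,p\textrm{-reg})\to(\Irr(L\wr\sym_w),\cal D_{\emptyset})$, sending $\chi_\lambda\mapsto\epsilon(\lambda)\xi^{\alpha_\lambda}$; then a second isometry $(\Irr(L\wr\sym_w),\cal D_{\emptyset})\to(\Irr(N\wr\sym_w),\cal C_{\emptyset})$. The second step is proved not by matching classes but by writing each scalar product as $\delta_{\alpha\beta}$ minus a sum over \emph{singular} classes, expanding those contributions via the wreath-product Murnaghan--Nakayama rule, and inducting on $w$. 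The twist $\tilde{\;}$ on the $r$-th coordinate and the extra sign $(-1)^{|\alpha^r|}$ enter precisely to absorb the factor $\psi_r(\omega)=-1$ arising in that induction; they are not visible from any naive class comparison. In particular the sign $\eta(\chi_\lambda)$ is $\epsilon(\lambda)\,(-1)^{|\alpha_\lambda^r|}$, not the $p$-sign alone.
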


\begin{proof}
We give here the main lines of the proof, first for the comfort of
the reader, but also because, in fact, Theorem 4.1 in \cite{JB} is
not stated as above; it is only stated in the case where $b$ is the
principal $p$-block of $\sym_n$, and when $w<p$. However, for our
purpose, and for the version of this theorem we give, these two
hypotheses are not necessary, as we will try and explain below. The
object of \cite{JB} is to give an analogue, for the $\ell$-blocks
of $\sym_n$ (cf \cite{KOR}), where $\ell$ is an arbitrary integer,
of Brou\'e's Abelian Defect Conjecture (hence the hypotheses on $b$
and $w$). In particular, in this case, and when $\ell=p$, ${\cal
C}_{\emptyset}$ is the set of $p$-regular elements of $G_{p,w}$.
This fails when $w \geq p$, but the generalized perfect isometry we
mentionned above still exists.

\medskip
We consider a $p$-cycle $\omega$ in $\sym_p$ and $L = \cyc{ \omega}$,
and we let $N=N_{\sym_p}(L)$. Then $N = L \rtimes \Aut (L) \cong \Z_p
\rtimes \Z_{p-1}$, and direct computation shows that $N$ has a single
$p$-block.

Now take any $p$-block $b$ of weight $w \neq 0$ of $\sym_n$. Take
representatives $(h_i= \omega^{i-1}, \, 1 \leq i \leq p)$ for the
conjugacy classes of $L$. As above, we have that the conjugacy classes
of $L \wr \sym_w$ are parametrized by the $p$-tuples $(\pi_1, \, \ldots
, \, \pi_p) \Vdash w$ of partitions of $w$. For $\pi_1$ in $\cal P_{\leq
w}$ (the set of partitions of at most $w$), we write $\cal D_{\pi_1}$
for the set of elements whose cycle structure has first part $\pi_1$. In
particular, $\cal D_{\emptyset}$ is the set of {\emph{regular}} elements
described by Osima (cf \cite{Osima}). We then have
$$L \wr \sym_w = \displaystyle \coprod_{\pi_{1} \in {\cal P}_{\leq
w}} {\cal D}_{\pi_{1}}.$$
Write $\Irr(L \wr \sym_w)=\{ \xi^{\alpha}, \, \alpha \Vdash w \}$.
One fundamental result we use (cf \cite{Osima} and \cite{KOR}), and
which does not depend on $w\neq 0$, is that, with the notations above,
$\lambda \longmapsto \alpha_{\lambda}$ induces a generalized perfect
isometry ${\cal I}_{\epsilon} \colon (b, p\textrm{-reg}) \rightarrow
(\Irr(L \wr \sym_w), {\cal D}_{\emptyset})$. We therefore have, for all
$\chi_{\lambda}, \, \chi_{\mu} \in b$,
\begin{equation}\label{perfisom1} \cyc{\chi_{\lambda}, \, \chi_{\mu}}_{p\textrm{-reg}} =  \cyc{ \epsilon(\lambda) \xi^{\alpha_{\lambda}}, \, \epsilon(\mu) \xi^{\alpha_{\mu}}}_{{\cal D}_{\emptyset}}. 
\end{equation}
The biggest part of the proof now consists in transporting this to the
group $G_{p,w}$ the elements of ${\cal C}_{\emptyset}$. The idea is to
work with {\emph{singular}} elements instead of regular elements, and
use induction on $w$. For any $\alpha, \, \beta \Vdash w$, we have
$$\langle \xi^{\alpha}, \,
\xi^{\beta}\rangle _{\scriptstyle \cal D_{\emptyset}}=\delta_{\alpha \beta} - \langle \xi^{\alpha}, \,
\xi^{\beta}\rangle _{ \scriptstyle L \wr \sym_w \setminus \cal D_{\emptyset}}$$and
$$\langle \xi^{\alpha}, \,
\xi^{\beta}\rangle _{\scriptstyle L \wr \sym_w \setminus \cal D_{\emptyset}}=
\displaystyle \sum_{ \emptyset \neq \pi_{1} \in {\cal P}_{\leq
w}} \langle \xi^{\alpha}, \, \xi^{\beta}\rangle _{\scriptstyle
{\cal D}_{\pi_1}}.$$
The induction on $w$ will be based on an analogue of the
Murnaghan-Nakayama Rule which holds in wreath products (cf
\cite[4.4]{Pfeiffer}): writing, for any $\emptyset \neq \pi_{1}
\in {\cal P}_{\leq w}$, ${\cal D}_{(\pi_1, \ldots, \pi_{p})}$ for the
conjugacy class of cycle structure $(\pi_1, \ldots, \pi_{\ell})$ of $L
\wr \sym_w$, and ${\cal D}_{(\pi_2, \ldots, \pi_{p})}^{w- |\pi_{1}|}$
for the set of elements of cycle structure $(\emptyset, \pi_2, \ldots,
\pi_{p})$ in $L \wr \sym_{w-|\pi_1|}$, we have
$$\xi^{\alpha}({\cal D}_{(\pi_1, \ldots, \pi_{p})} )= \displaystyle
\sum_{\scriptstyle 1 \leq s_1, \ldots,s_i \leq p} \; \;
\sum_{\scriptstyle \hat{\alpha} \in {\cal L}_{\alpha,(s_1,
\ldots,s_i)}^{\pi_{1}}} (-1)^{L_{\alpha \hat{\alpha}}}
\xi^{\hat{\alpha}}({\cal D}_{(\pi_2, \ldots,
\pi_{p})}^{w-|\pi_1|} ),$$
where ${\cal L}_{\alpha,(s_1, \ldots,s_i)}^{\pi_1}$ is the set of
$p$-tuples of partitions of $w-|\pi_1|$ which can be obtained from
$\alpha$ by removing successively a $k_1$-hook from $\alpha^{s_1}$,
then a $k_2$-hook from the ``$s_2$-th coordinate'' of the resulting
$p$-tuple of partitions of $w-k_1$, etc, and finally a $k_i$-hook from
the ``$s_i$-th coordinate'' of the resulting $p$-tuple of partitions
of $w-(k_1+ \cdots +k_{i-1})$, and, for $ \hat{\alpha} \in {\cal
L}_{\alpha,(s_1, \ldots,s_i)}^{\pi_1}$, $L_{\alpha \hat{\alpha}}$ is
the sum of the leg-lengths of the hooks removed to get from $\alpha$ to
$\hat{\alpha}$.

\smallskip
Writing $b_k$ for the number of $k$-cycles in $\pi_1$, and  $d_{\pi_1}=\prod_{\scriptstyle 1 \leq k \leq w} b_{ k}! (k p)^{b_{ k}}$, we then get, writing ${\bf s}=(s_1, \, \ldots , \, s_i)$,
$$\langle \xi^{\alpha}, \, \xi^{\beta}\rangle _{\scriptstyle
{\cal D}_{\pi_1}}= \displaystyle \frac{1}{d_{\pi_1}}
\sum_{\scriptstyle 1 \leq {\bf s} \leq p \atop 1 \leq {\bf t}
\leq p} \sum_{\scriptstyle \hat{\alpha} \in {\cal
L}_{\alpha,{\bf s}}^{\pi_1} \atop \hat{\beta} \in {\cal
L}_{\beta,{\bf t}}^{\pi_1} } (-1)^{L_{\alpha
\hat{\alpha}}}(-1)^{L_{\beta \hat{\beta}}} \langle
\xi^{\hat{\alpha}}, \, \xi^{\hat{\beta}} \rangle_{ {\cal
D}_{\emptyset}^{w- |\pi_1|}}.$$ We will use this formula to
exhibit a generalized perfect isometry between $\Irr(L \wr \sym_w)$
and $\Irr(N \wr \sym_w)$.

\smallskip
Take $(g_1=1, \, \ldots , \, g_p=\omega)$ representatives for the conjugacy classes of $N$. We have 
$$N \wr \sym_w = \displaystyle \coprod_{\pi_{p} \in {\cal P}_{\leq
w}} {\cal C}_{\pi_{p}},$$
where $\cal C_{\pi_p}$ denotes the set of elements whose cycle structure has $p$-th part $\pi_p$, and, for any $\alpha, \, \beta \Vdash w$,
$$\langle \chi^{\alpha}, \,
\chi^{\beta}\rangle _{\scriptstyle N \wr \sym_w \setminus \cal C_{\emptyset}}=
\displaystyle \sum_{ \emptyset \neq \pi_{p} \in {\cal P}_{\leq
w}} \langle \chi^{\alpha}, \, \chi^{\beta}\rangle _{\scriptstyle
{\cal C}_{\pi_p}}.$$
Now fix any $\emptyset \neq \pi_{p} \in {\cal P}_{\leq
w}$, and write $b'_k$ for the number of $k$-cycles in $\pi_p$, and  $c_{\pi_p}=\prod_{\scriptstyle 1 \leq k \leq w} b'_{ k}! (k p)^{b'_{ k}}$. In particular, if $\pi_1=\pi_p$, then $d_{\pi_1}=c_{\pi_p}$. We then have, by repeated use of the Murnaghan-Nakayama Rule, and with similar notations as above, 
$$\langle \chi^{\alpha}, \, \chi^{\beta}\rangle _{\scriptstyle
{\cal C}_{\pi_p}}= \displaystyle \frac{1}{c_{\pi_p}}
\sum_{\scriptstyle 1 \leq {\bf s} \leq p \atop 1 \leq {\bf t} \leq
p}  \; \sum_{\scriptstyle \hat{\alpha} \in {\cal L}_{\alpha,{\bf
s}}^{\pi_p}
 \atop \hat{\beta} \in {\cal L}_{\beta,{\bf s}}^{\pi_p} }  \Psi_{{\bf s}}(g_p)
   \overline{\Psi_{{\bf t}}(g_p)} (-1)^{L_{\alpha \hat{\alpha}}}(-1)^{L_{\beta \hat{\beta}}}
    \langle \chi^{\hat{\alpha}}, \, \chi^{\hat{\beta}} \rangle_{ {\cal C}_{\emptyset}^{w- |\pi_p|}},$$
where
$\Psi_{{\bf s}}(g_p)=\psi_{s_1}(g_p) \ldots \psi_{s_i}(g_p)$ (with $\Irr(N)= \{ \psi_1, \, \ldots , \, \psi_p \}$).
Now the transformation $\hat{ \; }$ defined before Theorem \ref{perfisom} is introduced precisely to get rid of the factor $\Psi_{{\bf s}}(g_p)$ (which is only problematic when ${\bf s}$ contains $r=(p+1)/2$, since $\psi_r(\omega)=-1$ but $\psi_i(\omega)=1$ for $i \neq r$). For $\alpha \Vdash w$, we let $\chi_0^{\alpha}=(-1)^{|\alpha_r|}\chi^{\hat{\alpha}}$, and we can use induction on $w$ to prove that, for any $\alpha, \, \beta \Vdash w$,
\begin{equation}\label{perfisom2}
\cyc{\chi_0^{\alpha}, \, \chi_0^{\beta}}_{\cal C_{\emptyset}} =  \cyc{
\xi^{\alpha}, \,  \xi^{\beta}}_{{\cal D}_{\emptyset}} 
\end{equation}
(cf \cite[3.10]{JB}). Note that the result is clearly true when
$w=0$ since $\cal C_{\emptyset}=\cal D_{\emptyset}=\emptyset$ and
$\chi^{\emptyset}=\xi^{\emptyset}=1$. Composing the results of
(\ref{perfisom1}) and (\ref{perfisom2}), we get the desired generalized
perfect isometry.
\end{proof}

Note that the above result does not depend on the order in which we list
the irreducible characters of $N$ (provided we modify the definition of
$\tilde{ \; }$ in accordance), or on the runner of the abacus we choose
to be fixed by conjugation. Each choice of labelling for the characters
of $N$ and for the fixed runner will give an (a priori different)
generalized perfect isometry. In particular, it was legitimate to make
the choices we made.

\section{A ${\cal C}_{\emptyset}$-basic set for $G_{p,w}$}\label{part4}

We now want to exhibit a ${\cal C}_{\emptyset}$-basic set for $G_{p,w}$,
which we will then transport to $\sym_n$ using the generalized perfect
isometry given by Theorem \ref{perfisom}. Our main tool to do so will be
Proposition \ref{propsemidirect}. In the course of its proof, we will
need the following general lemma about semidirect products, which we
therefore state separately.

\begin{lemma}\label{lemsemidirect}
Let $G = H \rtimes K$ be a finite group, and $\pi \colon G
\longrightarrow K$ be the canonical surjection. If $k \in K$ is
conjugate in $G$ to $g \in G$, then $k$ is conjugate in $K$ to $\pi(g)$.
In particular, two elements of $K$ are conjugate in $G$ if and only if
they are conjugate in $K$.
\end{lemma}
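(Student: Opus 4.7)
The plan is to exploit the fact that $\pi\colon G\to K$ is a genuine group homomorphism (because $H$ is normal in $G=H\rtimes K$), and that $\pi$ restricted to $K$ is the identity. This is really the only structural input needed: once we have a homomorphism fixing $K$ pointwise, conjugation relations in $G$ transport to conjugation relations in $K$.

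Concretely, I would start from the hypothesis that $k\in K$ and $g\in G$ are conjugate in $G$, so that there exists $x\in G$ with $xgx^{-1}=k$. Applying $\pi$ to this equality gives
\[
\pi(x)\,\pi(g)\,\pi(x)^{-1}=\pi(k)=k,
\]
since $\pi$ is a homomorphism and $\pi|_K=\mathrm{id}_K$. As $\pi(x)\in K$, this exhibits $k$ and $\pi(g)$ as $K$-conjugate, which is exactly the first claim.

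For the ``in particular'' statement, I would argue both directions. If $k_1,k_2\in K$ are conjugate in $K$, they are evidently conjugate in $G$ (since $K\leq G$). Conversely, if $k_1,k_2\in K$ are conjugate in $G$, then applying the first part with $g=k_2$ and $k=k_1$ (using $\pi(k_2)=k_2$) gives that $k_1$ and $k_2$ are conjugate in $K$.

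There is no real obstacle here: the statement is almost a formal consequence of the existence of the retraction $\pi\colon G\twoheadrightarrow K$ with $\pi|_K=\mathrm{id}_K$. The only thing one should check carefully is that $\pi$ is well-defined as a homomorphism, which is standard for semidirect products since every $g\in G$ has a unique decomposition $g=hk$ with $h\in H$, $k\in K$, and the map $g\mapsto k$ coincides with the quotient map $G\to G/H\cong K$.
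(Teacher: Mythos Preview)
Your proof is correct. The paper's argument is slightly different in flavour: instead of applying $\pi$ as a homomorphism, it writes the conjugating element as $h\ell$ with $h\in H$, $\ell\in K$, computes $g=(h(h^{-1})^{k'})k'$ where $k'=\ell k\ell^{-1}\in K$, and then invokes uniqueness of the $(H,K)$-decomposition to conclude $k'=\pi(g)$. Your approach is cleaner and more conceptual: you recognise that the whole statement is a formal consequence of having a retraction homomorphism $\pi$ with $\pi|_K=\mathrm{id}_K$, and simply push the conjugation relation through $\pi$. The paper's version has the minor advantage of being entirely self-contained (it never explicitly appeals to $\pi$ being a homomorphism), but yours isolates exactly the structural reason the lemma holds and would generalise immediately to any split surjection.
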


\begin{proof}
Suppose $g \in G$ and $k \in K$ are conjugate in $G$. Then there exist
$h \in H$ and $\ell \in K$ such that $g=(h\ell)k(h\ell)^{-1}$. Thus
$g = h (\ell k \ell^{-1}) h^{-1}$, and $\ell k \ell^{-1}=k' \in K$.
Hence $g = (h (h^{-1})^{k'})k'$, and, by uniqueness of the $(H, \,
K)$-decomposition in $G$, we must have $k'= \pi(g)$, so that $k$ and
$\pi(g)$ are conjugate in $K$.
\end{proof}

\begin{proposition}\label{propsemidirect}
Suppose $G = H \rtimes K$ is a finite group, and $\pi \colon G
\longrightarrow K$ is the canonical surjection. Suppose ${\cal C}$ is
a union of conjugacy classes of $G$ such that $K \subseteq {\cal C}$
and $|Cl_G({\cal C})| = |Cl_K(K)|$ (i.e. the numbers of $G$-conjugacy
classes in ${\cal C}$ and of conjugacy classes in $K$ are the same).
Then $\{ \psi \circ \pi, \, \psi \in \Irr(K) \}$, the set of irreducible
characters of $G$ with $H$ in their kernel, is a ${\cal C}$-basic set
for $G$.
\end{proposition}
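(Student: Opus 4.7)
The plan is to use the cardinality hypothesis, combined with Lemma~\ref{lemsemidirect}, to show that every $G$-conjugacy class contained in $\cal C$ has a representative inside $K$. Once this is established, the spanning and linear independence properties required of a $\cal C$-basic set will both follow from elementary character-theoretic manipulations.

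First I would carry out the following counting argument. By Lemma~\ref{lemsemidirect}, two elements of $K$ are $G$-conjugate if and only if they are $K$-conjugate, so the $|Cl_K(K)|$ conjugacy classes of $K$ are pairwise contained in distinct $G$-conjugacy classes. Since $K \subseteq \cal C$, each of these $G$-classes lies in $\cal C$, so $\cal C$ contains at least $|Cl_K(K)|$ distinct $G$-classes meeting $K$. The hypothesis $|Cl_G(\cal C)| = |Cl_K(K)|$ then forces every $G$-conjugacy class in $\cal C$ to meet $K$; equivalently, every element of $\cal C$ is $G$-conjugate to some element of $K$.

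Next I would recall that the irreducible characters of $G$ which contain $H$ in their kernel are exactly the inflations $\psi \circ \pi$ for $\psi \in \Irr(K)$, via the isomorphism $G/H \cong K$. To prove $\Z$-linear independence of the family $\{(\psi \circ \pi)^{\cal C}\}_{\psi \in \Irr(K)}$, suppose that $\sum_{\psi} a_\psi (\psi \circ \pi)^{\cal C} = 0$ with integer coefficients. Evaluating this identity at an element $k \in K \subseteq \cal C$ yields $\sum_{\psi} a_\psi \psi(k) = 0$ for every $k \in K$; linear independence of $\Irr(K)$ as class functions on $K$ forces every $a_\psi$ to vanish.

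For the spanning property, I would take an arbitrary $\chi \in \Irr(G)$ and decompose its restriction, writing $\Res_K^G(\chi) = \sum_{\psi \in \Irr(K)} n_\psi \psi$ with non-negative integers $n_\psi$. Given any $g \in \cal C$, the first step produces $k \in K$ with $g$ and $k$ conjugate in $G$. Since both $\chi$ and each $\psi \circ \pi$ are class functions on $G$, we obtain
$$\chi(g) = \chi(k) = \sum_{\psi \in \Irr(K)} n_\psi \psi(k) = \sum_{\psi \in \Irr(K)} n_\psi (\psi \circ \pi)(g),$$
while for $g \notin \cal C$ both $\chi^{\cal C}(g)$ and $(\psi \circ \pi)^{\cal C}(g)$ vanish. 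Hence $\chi^{\cal C} = \sum_{\psi} n_\psi (\psi \circ \pi)^{\cal C}$ as desired, which combined with the independence established above proves the proposition. The only genuinely substantive step is the counting argument in the first paragraph, since it is where the hypotheses $K \subseteq \cal C$ and $|Cl_G(\cal C)| = |Cl_K(K)|$ both get used; everything afterwards is mechanical.
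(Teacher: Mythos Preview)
Your proof is correct and follows essentially the same route as the paper's: both use Lemma~\ref{lemsemidirect} together with the cardinality hypothesis to show that every $G$-class in $\cal C$ meets $K$, and then obtain spanning from the decomposition of $\Res_K^G(\chi)$. The only cosmetic difference is that the paper dispenses with the explicit independence argument by observing $|\cal B| = |Cl_K(K)| = |Cl_G(\cal C)|$, so that a generating set of this size is automatically free; your direct verification of independence is of course equally valid.
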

Note that Proposition \ref{propsemidirect} is known in the case where
$H$ is a $p$-group, $K$ is a $p'$-group, and ${\cal C}$ is the set of
$p$-regular elements of $G$.
\begin{proof}

Let ${\cal B}=\{ \psi \circ \pi, \, \psi \in \Irr(K) \}$. Since
$|Cl_G({\cal C})| = |Cl_K(K)|=|{\cal B}|$, it suffices to prove that
${\cal B}^{\cal C}$ generates $\Irr(G)^{\cal C}$ over $\Z$.

\smallskip
Since ${\cal K} \subseteq {\cal C}$, and since, by Lemma
\ref{lemsemidirect}, any two elements of $K$ are conjugate in $K$ if and
only if they are conjugate in $G$, we get that $|Cl_K(K)| = | \{ Cl_G(g)
\in Cl_G({\cal C}) \; \mbox{such that} \; \exists g_K \in K, \, Cl_G(g)
= Cl_G(g_K) \} |$. Since $|Cl_G({\cal C})| = |Cl_K(K)|$, this shows that
every $G$-conjugacy class in ${\cal C}$ has a representative in $K$.

\smallskip
Now take any $\chi \in \Irr(G)$, and write $\Res_K^G(\chi) = \sum_{\psi
\in \Irr(K)} a_{\chi \psi} \psi$, with
$a_{\chi \psi} \in \N$ for all $\psi \in \Irr(K)$. Take any $g \in
{\cal C}$. By the above, there exists $g_K \in K$ such that $Cl_G(g) =
Cl_G(g_K)$, and, by Lemma \ref{lemsemidirect}, $g_K$ and $\pi(g)$ are
conjugate in $K$. We therefore get $$\chi(g)= \chi(g_K) = \displaystyle
\sum_{\psi \in \Irr(K)} a_{\chi \psi} \psi (g_K) = \sum_{\psi \in
\Irr(K)} a_{\chi \psi} \psi( \pi(g) )$$ (the first equality being true
because $g$ and $g_K$ are conjugate in $G$, the last because $g_K$ and
$\pi(g)$ are conjugate in $K$). Since this holds for any $g \in {\cal
C}$, we finally get that $\chi^{\cal C}= \sum_{\psi \in \Irr(K)} a_{\chi
\psi} (\psi \circ \pi)^{\cal C}$, so that ${\cal B}^{\cal C}$ generates
$\Irr(G)^{\cal C}$ over $\Z$.

\end{proof}

We can now prove the main result of this section

\begin{theorem}\label{basicsetwreath}
The set $B_{\emptyset}=\{ \chi^{\alpha} \in \Irr(G_{p,w}) \, | \,
\alpha=(\alpha^1, \, \ldots , \, \alpha^p) \Vdash w, \, \alpha^r=
\emptyset \}$ is a ${\cal C}_{\emptyset}$-basic set for $G_{p,w}$.
\end{theorem}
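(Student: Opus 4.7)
The plan is to apply Proposition~\ref{propsemidirect} to a suitable semidirect-product decomposition of $G_{p,w}$. First I would write $G_{p,w}=P\rtimes K$, where $P=\Z_p^w$ is the normal subgroup coming from the $\Z_p$-parts of the base group, and $K=\Z_{p-1}\wr\sym_w$ is embedded in $G_{p,w}$ via the natural inclusion $\Z_{p-1}\hookrightarrow N=\Z_p\rtimes\Z_{p-1}$. Normality of $P$ follows from $\Z_p\triangleleft N$ together with the fact that $\sym_w$ permutes the $N$-factors of the base group, and the decomposition $N=\Z_p\cdot\Z_{p-1}$ (with trivial intersection) gives $PK=G_{p,w}$ and $P\cap K=\{1\}$. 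Let $\pi\colon G_{p,w}\to K$ denote the canonical surjection.

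Next I would identify $B_\emptyset$ with the set $\{\psi\circ\pi\mid\psi\in\Irr(K)\}$ of irreducible characters of $G_{p,w}$ whose kernel contains $P$. The remark in \S\ref{wp} already shows that $\alpha^r=\emptyset$ implies $P\subseteq\ker\chi^{\alpha}$, so $B_\emptyset$ is contained in this set. On the other hand, $|B_\emptyset|$ equals the number of $p$-tuples of partitions of $w$ with empty $r$-th part, i.e.\ the number of $(p-1)$-tuples of partitions of $w$; and this is exactly $|\Irr(\Z_{p-1}\wr\sym_w)|=|\Irr(G_{p,w}/P)|$, so the inclusion is an equality.

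Finally I would verify the two hypotheses of Proposition~\ref{propsemidirect}. For $K\subseteq\cal C_\emptyset$: any element of $K$ has the form $((u_1,\ldots,u_w),\sigma)$ with each $u_i\in\Z_{p-1}\leq N$, so every cycle product is a product of elements of the subgroup $\Z_{p-1}$ of $N$, and therefore lies in $\Z_{p-1}$. In particular no cycle product is $N$-conjugate to $g_p$ (which represents the class of non-trivial elements of $\Z_p$), so the last entry $\pi_p$ in the cycle structure is empty and the element lies in $\cal C_\emptyset$. For the equality $|Cl_{G_{p,w}}(\cal C_\emptyset)|=|Cl_K(K)|$: the left-hand side counts $p$-tuples of partitions of $w$ with empty last part, i.e.\ $(p-1)$-tuples summing to $w$; the right-hand side counts conjugacy classes of $\Z_{p-1}\wr\sym_w$, which is the same number, since $\Z_{p-1}$ has $p-1$ conjugacy classes. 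Proposition~\ref{propsemidirect} then yields that $B_\emptyset$ is a $\cal C_\emptyset$-basic set of $G_{p,w}$.

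No serious obstacle arises: the key observation is choosing the right complement $K=\Z_{p-1}\wr\sym_w$, after which the kernel computation (forced by $\Z_p\subseteq\ker\psi_i$ for $i\neq r$), the cycle-product argument giving $K\subseteq\cal C_\emptyset$, and the matching of conjugacy-class counts all come out automatically.
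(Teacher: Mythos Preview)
Your proposal is correct and follows essentially the same approach as the paper: both write $G_{p,w}=\Z_p^w\rtimes(\Z_{p-1}\wr\sym_w)$, verify $K\subseteq\cal C_\emptyset$ via the cycle-product argument, match the conjugacy-class counts to apply Proposition~\ref{propsemidirect}, and then identify the characters with $P$ in their kernel as precisely $B_\emptyset$ by the remark in \S\ref{wp} together with a cardinality count. The only cosmetic difference is the order of presentation.
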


\begin{proof}

We start by proving that we can somehow apply Proposition
\ref{propsemidirect} to $G_{p,w}$ to obtain a ${\cal
C}_{\emptyset}$-basic set. Write $G_{p,w}=(\Z_p \rtimes \Z_{p-1} ) \wr \sym_w$
as
$$
\{ ( ( (a_1, \, \ldots , \, a_w) , \, (b_1, \, \ldots , \, b_w) ), \,
\sigma ) , \; a_i \in \Z_p, \, b_i \in \Z_{p-1} \, (1 \leq i \leq w), \,
\sigma \in \sym_w \}.
$$
One proves easily, using the properties of semidirect products and
the fact that $\sym_w$ acts by permutation on the $w$ copies of $\Z_p
\rtimes \Z_{p-1} $, the following:

\begin{itemize}

\item{}
${\cal H}= \{ ( ( (a_1, \, \ldots , \, a_w) , \, (1, \, \ldots , \, 1)
), \, 1 ) \}$ is a normal subgroup of $G_{p,w}$, isomorphic to $\Z_p^w$,

\item{}
  $\{ ( ( (1, \, \ldots , \, 1) , \, (1, \, \ldots , \, 1) ), \, \sigma )
\}$ is a subgroup of $G_{p,w}$, isomorphic to $\sym_w$,

  \item
  ${\cal K}= \{ ( ( (1, \, \ldots , \, 1) , \, (b_1, \, \ldots , \, b_w)
), \, \sigma ) \}$ is a subgroup of $G_{p,w}$, isomorphic to $ \Z_{p-1} \wr
\sym_w$,

  \item
  ${\cal H} \cap {\cal K} = \{1\}$, and ${\cal H} {\cal K}=G_{p,w}$.

\end{itemize}
We can therefore write $G_{p,w}$ as a semidirect product ${\cal H} \rtimes
{\cal K}$.


For any $k = ( ( (1, \, \ldots , \, 1) , \, (b_1, \, \ldots , \, b_w)
), \, \sigma ) \in {\cal K}$, we see that all cycle products of $k$
are products of some $b_i$'s, hence belong to $\Z_{p-1}$, and, in
particular, are not conjugate to the $p$-cycle of $N=\Z_p \rtimes
\Z_{p-1}$. Thus ${\cal K}\subseteq {\cal C}_{\emptyset}$.

Finally, the numbers of conjugacy classes in ${\cal C}_{\emptyset}$ and
in ${\cal K}$ are the same, since both of these sets can be labelled
by the $(p-1)$-tuples of partitions of $w$. We therefore deduce from
Proposition \ref{propsemidirect} that the irreducible characters
of $G_{p,w}$ with ${\cal H}$ in their kernel form a ${\cal
C}_{\emptyset}$-basic set for $G_{p,w}$.

Now, as we remarked when we presented the irreducible characters of
$G_{p,w}$, if $\alpha=(\alpha^1, \, \ldots , \, \alpha^p) \Vdash
w$ is such that $\alpha^r= \emptyset$, then $\chi^{\alpha} \in \Irr(G_{p,w})$ has $ {\cal H}$ in its kernel. Since these characters can
naturally be labelled by the $(p-1)$-tuples of partitions of $w$, they
must be all the irreducible characters of $G_{p,w}$ with ${\cal
H}$ in their kernel. This ends the proof.

\end{proof}

\section{A $p$-basic set for $\A_n$}\label{part5}

\subsection{Proof of Theorem~\ref{BSn}}
We keep the notation as above. As previously, for any positive integer
$w$ and odd prime $p$, we set $G_{p,w}=(\Z_p\rtimes \Z_{p-1})\wr
\sym_w$.
Using Lemma~\ref{blocks} we first reduce the problem as follows.
The signature $\varepsilon$ induces a permutation of order $2$ on the set
$\operatorname{Bk}_p(\sym_n)$ of $p$-blocks of $\sym_n$. Each orbit has one or two $p$-blocks
(which then have the same weight). Suppose that the orbit
$\omega=\{b,\varepsilon b\}$ has two $p$-blocks of weight $w \geq 0$. If
$w=0$, then $b$ and $\varepsilon b$ each consist of a single character
(which vanishes on $p$-singular elements), so that $b \cup \varepsilon
b$ is a $p$-basic set for itself, which satisfies Condition (2)
of Theorem~\ref{BSn} by Remark \ref{remw=0}. Suppose then that $w \neq 0$. By
Theorem~\ref{perfisom} there is a generalized perfect isometry
$$\cal I_{\eta} \colon (\Irr(G_{p,w}),\cal C_{\emptyset})\rightarrow
(b,p\textrm{-reg})$$
($\cal I_{\eta}$ is the inverse of the isometry $\cal J_{\eta}$ given by
Theorem~\ref{perfisom}). Furthermore, by Theorem~\ref{basicsetwreath},
we know that $G_{p,w}$ has a $\cal C_{\emptyset}$-basic set
$B_{\emptyset}$.
Hence we deduce, using
Proposition~\ref{BSisometrie}, that $b$ has a $p$-basic set
$B_{\emptyset}'$.
Consider now the set $\varepsilon B_{\emptyset}'\subseteq \varepsilon b$. 
This is clearly a
$p$-basic set of $\varepsilon b$. Moreover, by Lemma~\ref{blocks}, the
set $B_{\emptyset}'\cup\varepsilon B_{\emptyset}'$ 
is a $p$-basic set of $b\cup \varepsilon
b$, and it is $\varepsilon$-stable by construction. We now remark that,
if $\chi_{\lambda}\in b\cup \varepsilon b$, then $\lambda$ is never
self-conjugate, because the $p$-core of a self-conjugate partition
is also self-conjugate. Hence Condition (2) of Theorem~\ref{BSn} is
automatically satisfied by $B_{\emptyset}'\cup\varepsilon
B_{\emptyset}'$.

We can proceed as above for every orbit of size two, and we therefore
obtain a $p$-basic set of
$$\bigcup_{\omega=\{b,\varepsilon b\},\,|\omega|=2}
b\cup\varepsilon b,$$
which satisfies the conditions of Theorem~\ref{BSn}. Hence to prove
Theorem~\ref{BSn}, it is sufficient to prove that every
$\varepsilon$-stable $p$-block of $\sym_n$ has a $p$-basic set
satisfying the conditions of Theorem~\ref{BSn}.

Let $b$ be an $\varepsilon$-stable $p$-block of $\sym_n$ of weight $w
\geq 0$. As above, if $w=0$, then $b$ is a $p$-basic set for itself, and
satisfies Condition (2) of
Theorem~\ref{BSn} by Remark \ref{remw=0}. We therefore suppose that $w \neq 0$. As in the
previous case, write $\cal I_{\eta} \colon (\Irr(G_{p,w}),\cal
C_{\emptyset})\rightarrow (b,p\textrm{-reg})$ for the (inverse of the)
generalized perfect isometry given by
Theorem~\ref{perfisom}. Let
be the $\cal C_{\emptyset}$-basic set of $G_{p,w}$ obtained
in Theorem~\ref{basicsetwreath}. By Proposition~\ref{BSisometrie},
$B_{\emptyset}'=\cal I (B_{\emptyset})$ is a $p$-basic set of $b$. It
follows from the definition of $\cal I$ that
$$B_{\emptyset}'=\{\chi_{\lambda}\in b\ |\ \alpha_{\lambda}=(\alpha^1, \,
\ldots , \, \alpha^p) \Vdash w, \, \alpha^{(p+1)/2}=
\emptyset\}.$$
Let $\chi_{\lambda}\in B_{\emptyset}'$. Then, by
Lemma~\ref{conjugationabacus} and Convention~\ref{convention}, the
$r$-th part ($r = (p+1)/2$) of the $p$-quotient of $\lambda^*$
is empty. Hence $\varepsilon \chi_{\lambda}=\chi_{\lambda^*}\in
B_{\emptyset}'$. This proves that $B_{\emptyset}'$ is
$\varepsilon$-stable.
Lemma~\ref{preg} immediately implies that $B_{\emptyset}'$ satisfies
Condition (2) of Theorem~\ref{BSn}.
This proves the result.

\subsection{A $p$-basic set for $\A_n$}
We first recall the parametrizations of the classes and
irreducible characters of $\A_n$. Because of Clifford's
theory~\cite[III.2.12]{Feit}, we know that the set of
irreducible characters of $\A_n$ consists of the irreducible
constituents occurring in the restrictions to $\A_n$ of irreducible
characters of $\sym_n$. More precisely, let $\chi_{\lambda}$ be an
irreducible character of $\sym_n$. If $\lambda\neq\lambda^*$ then
$\Res_{\A_n}^{\sym_n}(\chi_{\lambda})=\Res_{\A_n}^{\sym_n}(\chi_{\lambda
^*})$ is irreducible. We will denote by $\rho_{\lambda}$ this
character of $\Irr(\A_n)$. If $\lambda$ is self-conjugate, then
$\Res_{\A_n}^{\sym_n}(\chi_{\lambda})$ splits into two distinct
constituents, denoted by $\rho_{\lambda,+}$ and $\rho_{\lambda,-}$. It
follows immediately from Clifford's theory~\cite[III.2.14]{Feit} that
$$
\Ind_{\A_n}^{\sym_n}(\rho_{\lambda})=\chi_{\lambda}+\chi_{\lambda^*}
\quad\textrm{and}\quad
\Ind_{\A_n}^{\sym_n}(\rho_{\lambda,+})=
\Ind_{\A_n}^{\sym_n}(\rho_{\lambda,+})=\chi_{\lambda}.
$$
Recall that the conjugacy classes of $\sym_n$ are parametrized
by the partitions of $n$ (the parts of a partition $\lambda$
of $n$ give the cycle structure of the corresponding conjugacy
class of $\sym_n$). If $\lambda$ is a self-conjugate partition
of $n$, then the class of $\sym_n$ labelled by the partition
$\overline{\lambda}$ is contained in $\A_n$, and splits into two
classes, $\overline{\lambda}_{+}$ and $\overline{\lambda}_{-}$, of
$\A_n$. Moreover if $c$ denotes a class of $\A_n$ distinct from
$\overline{\lambda}_{+}$ and $\overline{\lambda}_{-}$, then for $x\in
c$, we have (cf~\cite[2.5.13]{James-Kerber})
$$\rho_{\lambda,\pm}(x)=\chi_{\lambda}(x)/2,\quad
\rho_{\lambda,\pm}(\overline{\lambda}_{+})=x_{\lambda}\pm y_{\lambda}
\quad\textrm{and}\quad
\rho_{\lambda,\pm}(\overline{\lambda}_{-})=x_{\lambda}\mp y_{\lambda},$$
with $x_{\lambda}=\pm 1/2$ and $y_{\lambda}$ is some non-zero complex
number.

\begin{remark}\label{valentiere}
Note that if $x$ is a representative of a class of $\A_n$ distinct
from $\overline{\lambda}_{+}$ and $\overline{\lambda}_{-}$,
then $\rho_{\lambda,\pm}(x)$ is a rational number,
because $\rho_{\lambda,\pm}(x)=\chi_{\lambda}(x)/2$ with
$\chi_{\lambda}(x)\in\Z$. However, $\rho_{\lambda,\pm}(x)$ is an
algebraic integer. Hence it follows that $\rho_{\lambda,\pm}(x)\in\Z$.
\end{remark}

We can now prove:

\begin{theorem}\label{BSAn}
Let $n$ be a positive integer and $p$ be an odd prime. Then
$\A_n$ has a $p$-basic set.
\end{theorem}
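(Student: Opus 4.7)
The plan is to apply the criterion of Geck \cite[\S4]{GeckBS}, already recalled in the introduction: for a finite group $G$ with normal subgroup $H$ such that $G/H$ is abelian and $p\nmid |G/H|$, any $\cal R_0(G/H)$-stable $p$-basic set $B$ of $G$ such that $d_p(B_H)$ generates $\cal R_p(H)$ restricts to a $p$-basic set $B_H$ of $H$. I would apply this with $G=\sym_n$, $H=\A_n$, and $B=\B_{\emptyset}$, the $p$-basic set of $\sym_n$ produced in Theorem~\ref{BSn}.

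Two of the three hypotheses of Geck's criterion are almost immediate. The quotient $\sym_n/\A_n$ is cyclic of order~$2$, hence abelian and coprime to the odd prime~$p$. The ring $\cal R_0(\sym_n/\A_n)$ is generated by the trivial character and the sign character $\varepsilon$, so $\cal R_0(\sym_n/\A_n)$-stability of $\B_{\emptyset}$ is equivalent to $\varepsilon$-stability, which is precisely condition~(1) of Theorem~\ref{BSn}.

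The remaining step is to show that $d_p(\B_{\emptyset,\A_n})$ generates $\cal R_p(\A_n)$. For this I would combine two inputs: since $\B_{\emptyset}$ is a $p$-basic set of $\sym_n$, $d_p(\B_{\emptyset})$ is a $\Z$-basis of $\cal R_p(\sym_n)$; and by Clifford theory, every irreducible $p$-Brauer character of $\A_n$ occurs as a constituent of the restriction of some irreducible $p$-Brauer character of $\sym_n$, so that the restriction map $\Res_{\A_n}^{\sym_n}\colon\cal R_p(\sym_n)\to\cal R_p(\A_n)$ is surjective. Since $d_p$ commutes with restriction and each $\Res_{\A_n}^{\sym_n}(\chi)$ for $\chi\in\B_{\emptyset}$ is a $\Z$-combination of elements of $\B_{\emptyset,\A_n}$, one deduces that $\cal R_p(\A_n)=\Res_{\A_n}^{\sym_n}(\cal R_p(\sym_n))$ is contained in the $\Z$-span of $d_p(\B_{\emptyset,\A_n})$. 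Applying Geck's criterion then yields that $\B_{\emptyset,\A_n}$ is a $p$-basic set of $\A_n$, which is the desired conclusion. The genuinely hard part of the argument has already been completed in Theorem~\ref{BSn}: once the $\varepsilon$-stable $p$-basic set $\B_{\emptyset}$ is available for $\sym_n$, the passage to $\A_n$ is essentially formal, and I do not anticipate any serious obstacle beyond the generation step above.
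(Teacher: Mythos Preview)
Your overall plan via Geck's criterion matches the paper's announced strategy, but the generation step has a genuine gap. The assertion that $\Res_{\A_n}^{\sym_n}\colon\cal R_p(\sym_n)\to\cal R_p(\A_n)$ is surjective is false: every class function restricted from $\sym_n$ is invariant under the outer automorphism $\tau$ of $\A_n$ given by conjugation by a transposition, so the image of restriction lies in the $\tau$-fixed sublattice. Whenever some $\theta\in\Br(\sym_n)$ is $\varepsilon$-stable, its restriction splits as $\theta'_+ + \theta'_-$ with $\tau(\theta'_+)=\theta'_-\neq\theta'_+$, and neither $\theta'_+$ nor $\theta'_-$ lies in that image. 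That each element of $\Br(\A_n)$ occurs as a \emph{constituent} of some restriction does not place it in the $\Z$-span of restrictions.

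What makes generation work is precisely Condition~(2) of Theorem~\ref{BSn}, which your argument never invokes. The set $\B_{\emptyset,\A_n}$ contains the individual constituents $\rho_{\lambda,+}$ and $\rho_{\lambda,-}$ for each self-conjugate $\lambda\in\Lambda_{\emptyset}$, and it is these (not merely their sum, which is a restriction) that reach the non-$\tau$-invariant part of $\cal R_p(\A_n)$. The paper exploits Condition~(2) through evaluation on the split classes $\overline{\lambda}_{0,\pm}$, which are $p$-regular exactly because $\overline{\lambda}_0$ is. The delicate case is a self-conjugate $\mu\notin\Lambda_{\emptyset}$: then $\overline{\mu}$ is $p$-singular, so $\rho_{\mu,+}^{p\textrm{-reg}}=\rho_{\mu,-}^{p\textrm{-reg}}$, and restricting the $\B_{\emptyset}$-expansion of $\chi_{\mu}^{p\textrm{-reg}}$ only yields $2\rho_{\mu,\pm}^{p\textrm{-reg}}$ in the span; a parity argument evaluating at $\overline{\lambda}_{0,+}$ (using Remark~\ref{valentiere} and $2x_{\lambda_0}=\pm1$) shows the relevant coefficients are even, so one may divide by~$2$. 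Once generation is established this way, either Geck's criterion or the paper's direct freeness argument (induce to $\sym_n$, then separate $\rho_{\lambda_0,\pm}$ on $\overline{\lambda}_{0,+}$) completes the proof.
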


\begin{proof}
Let $\B_{\emptyset}$ be the $p$-basic set of $\sym_n$ obtained in
Theorem~\ref{BSn}. We will prove that $\B_{\emptyset,\A_n}$, the
restriction of $\B_{\emptyset}$ to $\A_n$, 
is a $p$-basic set of
$\A_n$. We denote by $A$ the set of self-conjugate partitions
$\lambda$ of $n$ such that $\overline{\lambda}$ is $p$-regular, and by
$S$ the set of sets $\{\lambda,\lambda^*\}$ with $\lambda\neq\lambda^*$
and $\chi_{\lambda}\in B$. For $x\in S$ we set
$x=\{\lambda_x,\lambda_x^*\}$. By Lemma~\ref{preg}, if $\lambda=\lambda^*$, then $\chi_{\lambda} \in B$ if and only if $\lambda \in A$. We thus have
$$\B_{\emptyset}=\{\chi_{\lambda}\ |\ \lambda\in A\}\cup
\{\chi_{\lambda_x},\chi_{\lambda_x^*}\ |\ x\in S\}.$$
It therefore follows that
$$\B_{\emptyset,\A_n}=\{\rho_{\lambda,\pm}\ |\ \lambda\in A\} \cup
\{\rho_{\lambda_x}\ | \ x \in S\}.$$

We will now prove that the family $\B_{\emptyset,\A_n}^{p\textrm{-reg}}$ 
is free.
Let $a_{\lambda,\pm}$ ($\lambda\in A$), $a_x$ ($x \in S$) be integers
such that
\begin{equation}\label{rel1}
\sum_{\lambda\in
A}(a_{\lambda,+}\rho_{\lambda,+}^{p\textrm{-reg}}+a_{\lambda,-}
\rho_{\lambda,-}^{p\textrm{-reg}})+\sum_{x\in
S}a_x\rho_{\lambda_x}^{p\textrm{-reg}}=0.
\end{equation}
For every class function $\varphi$ of $\A_n$, we have
$\Ind_{\A_n}^{\sym_n}(\varphi^{p\textrm{-reg}})=(\Ind_{\A_n}^{\sym_n}(\varphi))^{p\textrm{-reg}}$.
Hence, inducing to $\sym_n$ Relation~(\ref{rel1}), we obtain
$$\sum_{\lambda\in
A}(a_{\lambda,+}+a_{\lambda,-})\chi_{\lambda}^{p\textrm{-reg}}
+\sum_{x\in
S}a_x(\chi_{\lambda_x}^{p\textrm{-reg}}+\chi_{\lambda_x^*}^{p\textrm{-reg}})=0.$$
Using the fact that $\B_{\emptyset}^{p\textrm{-reg}}$ is free, 
we deduce that $a_x=0$
for all $x \in S$
and $a_{\lambda,+}=-a_{\lambda,-}$ for all $\lambda \in A$. Therefore,
Relation~(\ref{rel1}) gives
$$
\sum_{\lambda\in
A}a_{\lambda,+}(\rho_{\lambda,+}^{p\textrm{-reg}}-
\rho_{\lambda,-}^{p\textrm{-reg}})
=0.
$$
We now fix $\lambda_0\in A$. The partition $\overline{\lambda}_0$ is
$p$-regular, so that the class of $\sym_n$ corresponding to
$\overline{\lambda}_0$ is $p$-regular. It follows that the classes
$\overline{\lambda}_{0,\pm}$ are also $p$-regular.
For $\lambda\in A$ distinct from
$\lambda_0$ we have
$\rho_{\lambda,+}^{p\textrm{-reg}}(\overline{\lambda}_{0,+})=
\rho_{\lambda,-}^{p\textrm{-reg}}(\overline{\lambda}_{0,+})$.
Moreover $\rho_{\lambda_0,\pm}^{p\textrm{-reg}}(\overline{\lambda}_{0,+})
=x_{\lambda_0}\pm y_{\lambda_0}$.
It follows that
$$2a_{\lambda,+}y_{\lambda_0}=0.$$
But $y_{\lambda_{0}}\neq 0$ implies that $a_{\lambda,+}=0$, so that
$a_{\lambda,+}=-a_{\lambda,-}=0$ for all $\lambda \in A$. Hence
$\B_{\emptyset,\A_n}^{p\textrm{-reg}}$ is free.

We will now prove that the family $\B_{\emptyset,\A_n}^{p\textrm{-reg}}$ 
generates
$\Irr(\A_n)^{p\textrm{-reg}}$ over $\Z$.
Let $\mu$ be a partition of $n$. Since $\B_{\emptyset}$ is a
$p$-basic set, we have
\begin{equation}\label{eq2}
\chi_{\mu}^{p\textrm{-reg}}=\sum_{\lambda\in
A}a_{\mu,\lambda}\chi_{\lambda}^{p\textrm{-reg}}
+\sum_{x\in
S}(a_{\mu,x} \chi_{\lambda_x}^{p\textrm{-reg}}+b_{\mu,x}
\chi_{\lambda_x^*}^{p\textrm{-reg}}),
\end{equation}
for some integers
$a_{\mu,\lambda}$ ($\lambda \in A$), $a_{\mu, x}$ and $b_{\mu x}$ ($x \in S$).

For every class function $\varphi$ of $\sym_n$, we
have $\Res_{\A_n}^{\sym_n} (\varphi^{p\textrm{-reg}})=
(\Res_{\A_n}^{\sym_n}(\varphi))^{p\textrm{-reg}}$.

We first suppose that $\mu\neq\mu^*$.
Hence restricting to
$\A_n$ Relation~(\ref{eq2}), we obtain
$$\rho_{\mu}^{p\textrm{-reg}}=\sum_{\lambda\in
A}a_{\mu,\lambda}(\rho_{\lambda,+}^{p\textrm{-reg}}+
\rho_{\lambda,-}^{p\textrm{-reg}})
+\sum_{x\in
S}(a_{\mu,x}+b_{\mu,x}) \rho_{\lambda_x}^{p\textrm{-reg}},$$ so that
$\rho_{\mu}^{p\textrm{-reg}}$ is a $\Z$-linear combination of elements
of $\B_{\emptyset,\A_n}^{p\textrm{-reg}}$.

We now suppose that $\mu$ is a self-conjugate partition not lying in
$A$. In Relation~(\ref{eq2}), we have $a_{\mu,x}=b_{\mu,x}$ because
$\chi_{\mu}$ is $\varepsilon$-stable.
Moreover $\overline{\mu}$ labels a $p$-singular class of
$\sym_n$. Thus
$\rho_{\mu,+}^{p\textrm{-reg}}=\rho_{\mu,-}^{p\textrm{-reg}}$, and it
follows that
$$2\rho_{\mu}^{p\textrm{-reg}}=\sum_{\lambda\in
A}a_{\mu,\lambda}(\rho_{\lambda,+}^{p\textrm{-reg}}+
\rho_{\lambda,-}^{p\textrm{-reg}})
+\sum_{x\in
S}2a_{\mu,x} \rho_{\lambda_x}^{p\textrm{-reg}}.$$
Let $\lambda_0\in A$. Evaluating the preceding relation on the class
$\overline{\lambda}_{0,+}$, we obtain
$$2x_{\lambda_0}a_{\mu,\lambda_0}=2\rho_{\mu}^{p\textrm{-reg}}(
\overline{\lambda}_{0,+})-
\sum_{\lambda\neq \lambda_0
}2a_{\mu,\lambda}\,\rho_{\lambda,+}^{p\textrm{-reg}}(\overline{\lambda}_{0,+})
-\sum_{x\in S}
2a_{\mu,x}\,
\rho_{\lambda_x}^{p\textrm{-reg}}(\overline{\lambda}_{0,+})\in2\Z,$$
because $\rho_{\mu}^{p\textrm{-reg}}(
\overline{\lambda}_{0,+})\in\Z$ and
$\rho_{\lambda,+}^{p\textrm{-reg}}(\overline{\lambda}_{0,+})\in\Z$ for
$\lambda\neq\lambda_0$ by Remark~\ref{valentiere}.
However, $2x_{\lambda_0}=\pm 1$, so that $a_{\mu,\lambda_0}=2a_{\mu,\lambda}'$ with
$a_{\mu,\lambda}'\in\Z$. We therefore deduce that
$$\rho_{\mu}^{p\textrm{-reg}}=\sum_{\lambda\in
A}a_{\mu,\lambda}'(\rho_{\lambda,+}^{p\textrm{-reg}}+
\rho_{\lambda,-}^{p\textrm{-reg}})
+\sum_{x\in
S}a_{\mu,x} \rho_{\lambda_x}^{p\textrm{-reg}}.$$
Since the result is obvious if $\mu \in A$, the result follows.
\end{proof}

\section{Consequences for the decomposition matrices of
$\A_n$}\label{consequence}

Let $G$ be a finite group and $p$ a prime. We denote by $\Br(G)$ the set
of irreducible Brauer characters of $G$ as in~\cite[IV.2]{Feit}. Then,
for every $\chi\in\Irr(G)$ and $\theta\in\Br(G)$, there are non-negative
integers $d_{\chi,\theta}$ (which are uniquely determined), such that
$$\chi^{p\textrm{-reg}}=\sum_{\theta\in\Br(G)}d_{\chi,\theta}\,
\theta.$$
The matrix $D=(d_{\chi,\theta})_{\chi\in\Irr(G),\theta\in\Br(G)}$ is the
so-called $p${\emph{-decomposition matrix}} of $G$~\cite[I.17]{Feit}. In
this section, we propose to deduce some results on the $p$-decomposition
matrix of $\A_n$ knowing that of $\sym_n$.

\subsection{Reduction of the problem}

We keep the notation as above. Let $B$ be a $p$-basic set of $G$.
We set $D_B=(d_{\chi,\theta})_{\chi\in B,\theta\in \Br(G)}$. Then
$D_B$ is a unimodular matrix. Moreover, $D_B$ determines $D$, because
we have 
$$D=P_BD_B,$$ 
where, $P_B$ is the matrix with coefficients $p_{\chi,\varphi}$
($\chi\in\Irr(G),\,\varphi\in B$), such that
$$\chi^{p\textrm{-reg}}=\sum_{\varphi\in
B}p_{\chi,\varphi}\,\varphi^{p\textrm{-reg}}.$$
Note that, if $G = \sym_n$ and $B=\B_{\emptyset}$, then we know
the coefficients $p_{\chi,\varphi}$ in the matrix $P_B$, at least
theoretically. They come from their analogues $a_{\chi,\varphi}$ in
$G_{p,w}$, described in the proof of Proposition \ref{propsemidirect}.

\begin{proposition}\label{bijection}
Let $p$ be an odd prime. Let $B$ be an $\varepsilon$-stable $p$-basic
set of $\sym_n$, where $\varepsilon$ denotes the sign character of
$\sym_n$. We denote by $D_B$ the restriction of the $p$-decomposition
matrix of $\sym_n$ to $B$ as above. Then $\varepsilon$ acts on
$\Br(\sym_n)$, the rows of $D_B$ and the columns of $D_B$, and these
three operations are equivalent.
\end{proposition}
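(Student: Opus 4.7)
The plan is built around the single multiplicative identity
$$d_{\varepsilon\chi,\,\varepsilon\theta}=d_{\chi,\theta}\qquad(\chi\in\Irr(\sym_n),\ \theta\in\Br(\sym_n)),$$
which will simultaneously define the three actions and show they coincide.

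First I would justify that $\varepsilon$ acts on $\Br(\sym_n)$. Since $p$ is odd, the values $\pm1$ of the sign character are $p$-regular, so $\varepsilon$ defines a linear Brauer character of $\sym_n$. Equivalently, the sign representation remains a $1$-dimensional irreducible $F\sym_n$-module for any field $F$ of characteristic $p>2$. For any $\theta\in\Br(\sym_n)$ afforded by an irreducible $F\sym_n$-module $M$, the twist $M\otimes\varepsilon$ is again $1$-dimensionally twisted hence irreducible, with Brauer character $\varepsilon\theta$. This gives an involution $\theta\mapsto\varepsilon\theta$ on $\Br(\sym_n)$, and hence a permutation of the columns of $D_B$. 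The hypothesis that $B$ is $\varepsilon$-stable ($\chi_\lambda\in B\Leftrightarrow\chi_{\lambda^*}\in B$) immediately gives that $\varepsilon$ permutes the rows of $D_B$.

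Next I would establish the key identity. For any $\chi\in\Irr(\sym_n)$ and any $p$-regular element $g\in\sym_n$,
$$(\varepsilon\chi)^{p\textrm{-reg}}(g)=\varepsilon(g)\,\chi(g)=\varepsilon(g)\sum_{\theta\in\Br(\sym_n)}d_{\chi,\theta}\,\theta(g)=\sum_{\theta\in\Br(\sym_n)}d_{\chi,\theta}\,(\varepsilon\theta)(g).$$
Reindexing by $\theta'=\varepsilon\theta$ and using uniqueness of the decomposition of $(\varepsilon\chi)^{p\textrm{-reg}}$ in the basis $\Br(\sym_n)$, we get $d_{\varepsilon\chi,\varepsilon\theta}=d_{\chi,\theta}$, which is the desired identity.

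Finally I would read off the statement. The identity says that the matrix $D_B$ is invariant under the simultaneous permutation of rows by $\chi\mapsto\varepsilon\chi$ and of columns by $\theta\mapsto\varepsilon\theta$. In other words, permuting the rows of $D_B$ according to the $\varepsilon$-action on $B$ produces the same matrix as permuting its columns according to the $\varepsilon$-action on $\Br(\sym_n)$; and both correspond, through the labelling of rows and columns, to the single involution $\varepsilon\cdot$. Thus the three operations — the action on $\Br(\sym_n)$, on rows and on columns — are equivalent. There is no real obstacle here beyond carefully checking that $\varepsilon\theta\in\Br(\sym_n)$ (which is where the hypothesis $p$ odd is used) and then applying uniqueness of decomposition numbers; no block-theoretic or combinatorial input is needed beyond what is already in the paper.
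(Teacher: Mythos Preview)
Your argument establishes the identity $d_{\varepsilon\chi,\varepsilon\theta}=d_{\chi,\theta}$ correctly, and this is indeed the starting point of the paper's proof as well. But you stop too early. What the proposition asserts (and what is actually used just after it, to produce an $\varepsilon$-equivariant bijection $\Phi\colon B\to\Br(\sym_n)$) is that the $\Z/2\Z$-sets $B$ and $\Br(\sym_n)$ are isomorphic, i.e.\ that
\[
|\Fix_{\varepsilon}(B)|=|\Fix_{\varepsilon}(\Br(\sym_n))|.
\]
Your final paragraph only rephrases the identity as $PD_B=D_BQ^{-1}$ (``permuting the rows produces the same matrix as permuting the columns''). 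That relation by itself does \emph{not} force the row permutation and the column permutation to have the same number of fixed points: if $D_B$ were, say, the all-ones matrix, then $PD_BQ=D_B$ would hold for \emph{any} permutation matrices $P,Q$.

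The missing step is precisely the one place where the hypothesis ``$B$ is a $p$-basic set'' matters: it makes $D_B\in\operatorname{GL}_m(\Z)$. From $D_B=PD_BQ$ one then gets $Q^{-1}=D_B^{-1}PD_B$, hence $\Tr(Q^{-1})=\Tr(P)$; since a permutation and its inverse have the same fixed points, $\Tr(Q)=\Tr(P)$, which is exactly $|\Fix_{\varepsilon}(B)|=|\Fix_{\varepsilon}(\Br(\sym_n))|$. Once you add this trace argument, your proof coincides with the paper's.
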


\begin{proof}
We denote by $\Fix_{\varepsilon}(B)$ (respectively
$\Fix_{\varepsilon}(\Br(\sym_n))$) the set of $\varepsilon$-stable
characters in $B$ (respectively in $\Br(\sym_n)$). In order to prove the
assumption, we have to prove
$$|\Fix_{\varepsilon}(B)|=|\Fix_{\varepsilon}(\Br(\sym_n))|.$$
For ease of notation, we will, throughout the proof, write
$\hat{\varphi}$ for $ \varphi^{p\textrm{-reg}}$ ($\varphi \in
\Irr(\sym_n)$).

We set $D_B=(d_{\chi,\theta})_{\chi\in B,\theta\in\Br(\sym_n)}$
and $D_{B,\varepsilon}=(d_{\varepsilon\chi,
\hat{\varepsilon}\theta})_{\varepsilon\chi\in
B,\hat{\varepsilon}\theta\in\Br(\sym_n)}$. We have
$$\widehat{\varepsilon\chi}=\sum_{\theta\in\Br(\sym_n)}
d_{\varepsilon\chi,\theta}\theta.$$
Hence, it follows that
$$\begin{array}{lcl}
\hat{\chi}&=&\sum\limits_{\theta\in\Br(\sym_n)}d_{\varepsilon\chi,\theta}\hat{
\varepsilon}\theta\\
&=&
\sum\limits_{\hat{\varepsilon}\theta'\in\Br(\sym_n)}d_{\varepsilon\chi,\hat{
\varepsilon}\theta'}\theta'\\
&=&\sum\limits_{\theta'\in\Br(\sym_n)}d_{\varepsilon\chi,\hat{
\varepsilon}\theta'}\theta'.
\end{array}
$$
Since $\Br(\sym_n)$ is a $\Z$-basis of $\Z\Br(\sym_n)$, we deduce that
$d_{\chi,\theta}=d_{\varepsilon\chi,\hat{\varepsilon}\theta}$, that is
$$D_B=D_{B,\varepsilon}.$$
Furthermore, $D_{B,\varepsilon}$ is obtained from $D_B$ by a permutation
of the rows and of the columns. Thus, there are permutation matrices $P$
and $Q$ such that
$$D_{B,\varepsilon}=PD_BQ.$$  
We deduce that $D_B=PD_BQ$. Furthermore, we have $|B|=|\Br(\sym_n)|=m$.
Since $B$ is a $\Z$-basis of $\Z\Br(\sym_n)$, we have
$D_B\in\operatorname{GL}_m(\Z)$. Thus, we have
$$Q^{-1}=D_B^{-1}PD_B.$$  
It follows that $\Tr(Q^{-1})=\Tr(P)$. Since $P$, $Q$, $P^{-1}$
and $Q^{-1}$ are permutation matrices, we deduce that they
have the same trace. Furthermore, the number of fixed-point
of the permutation induced by $P$ is the number of fixed
rows. Hence, we have $\Tr(P)=|\Fix_{\varepsilon}(B)|$.
Moreover, we have $D_B^{-1}=QD_B^{-1}P$. We then deduce that
$\Tr(Q)=|\Fix_{\varepsilon}(\Br(\sym_n))|$, and the result follows.
\end{proof}

We denote by $\B_{\emptyset}$ the $\varepsilon$-stable $p$-basic set of
$\sym_n$ obtained in Theorem~\ref{BSn} and by $\B_{\A_n,\emptyset}$ the
restriction of $B_{\emptyset}$ to $\A_n$. As above, if $\lambda$ is a
partition of $n$, we denote by $\alpha_{\lambda}$ its $p$-quotient. As
above, we set
$$\Lambda_{\emptyset}=\{\lambda\vdash n\ |\
\alpha_{\lambda}^{(p-1)/2}=\emptyset\}.$$
We denote by $\Lambda_{\emptyset,1}$ the set of
$\lambda\in\Lambda_{\emptyset}$ such that $\lambda=\lambda^*$ and
by $\Lambda_{\emptyset,2}$ the set of sets $\{\lambda,\lambda^*\}$
for $\lambda\in\Lambda_{\emptyset}$ with $\lambda\neq\lambda^*$. For
$w\in\Lambda_{\emptyset,2}$, we fix $\lambda_{w}\in w$. In particular,
we have $w=\{\lambda_w,\lambda_w^*\}$. In Theorem~\ref{BSn} and in the
proof of Theorem~\ref{BSAn}, we proved that
$$\B_{\emptyset}=\{\chi_{\lambda}\ |\
\lambda\in\Lambda_{\emptyset}\}\quad\textrm{and}\quad
\B_{\A_n,\emptyset}=\{\rho_{\lambda,\pm}\ |\ \lambda \in
\Lambda_{\emptyset,1}\}\cup\{\rho_{\lambda_w}\ |\
w\in\Lambda_{\emptyset,2}\},$$
where
$\Res_{\A_n}^{\sym_n}(\chi_{\lambda})=\rho_{\lambda,+}+\rho_{\lambda,-}$
for $\lambda\in\Lambda_{\emptyset,1}$, and $\rho_{\lambda_w}=
\Res_{\A_n}^{\sym_n}(\chi_{\lambda_w})$ for $w\in\Lambda_{\emptyset,2}$.

By Proposition \ref{bijection}, we can take
$\Phi:\B_{\emptyset} \rightarrow\Br(\sym_n)$ an
$\varepsilon$-equivariant bijection. For $\lambda, \, \mu
\in\Lambda_{\emptyset}$, we set $\theta_{\mu}=\Phi(\chi_{\mu})$ and
$d_{\lambda,\mu}=d_{\chi_{\lambda},\theta_{\mu}}$. By Clifford's
modular theory~\cite[9.18]{Hupp} (which could be applied because
the quotient $\sym_n/\A_n$ is cyclic of order prime to $p$) we
know how the $\theta_{\mu}$'s restrict from $\sym_n$ to $\A_n$. If
$\mu\in\Lambda_{\emptyset,1}$, then $\Res_{\A_n}^{\sym_n}(\theta_{\mu})$
is the sum of two irreducible Brauer characters of $\A_n$,
which we denote by $\theta_{\mu,+}'$ and $\theta_{\mu,-}'$.
If $w\in\Lambda_{\emptyset,2}$, then $\theta_{\mu_w}'=
\Res_{\A_n}^{\sym_n}(\theta_{\mu_w}) \in \Br(\A_n)$. We then have
$$\Br(\A_n)=\{\theta_{\mu,\pm}'\ |\
\mu\in\Lambda_{\emptyset,1}\}\cup\{\theta_{\mu_w}'\ |\
w\in\Lambda_{\emptyset,2}\}.$$
For $\lambda, \, \mu \in\Lambda_{\emptyset,1}$ and $w, \, w'
\in\Lambda_{\emptyset,2}$, there are non-negative integers $d_{\lambda;
\pm,\mu_w}'$, $d_{\lambda;\pm,\mu , \pm}'$, $d_{\lambda_{w'},\mu_w}'$
and $d_{\lambda_{w'},\mu;\pm}'$ such that
$$\rho_{\lambda,\pm}^{p\textrm{-reg}}=\sum_{\mu\in\Lambda_{\emptyset,1}}
(d_{\lambda; \pm,\mu;+}'\theta_{\mu,+}'+d_{\lambda;\pm,\mu;-}'\theta_{\mu,-}')
+\sum_{w\in\Lambda_{\emptyset,2}}d_{\lambda;\pm,\mu_{w}}'\theta_{\mu_w},$$
$$\rho_{\lambda_{w'}}^{p\textrm{-reg}}=\sum_{\mu\in\Lambda_{\emptyset,1}}
(d_{\lambda_{w'},\mu;+}'\theta_{\mu,+}'+d_{\lambda_{w'},\mu;-}'
\theta_{\mu,-}')+\sum_{w\in\Lambda_{\emptyset,2}}d_{\lambda_{w'},\mu_{w}}'\theta_{\mu_w}.$$
We then obtain the following about the $p$-decomposition numbers of
$\sym_n$ and $\A_n$:

\begin{theorem}\label{nbdecAn}
With the above notations, we have, for all $\lambda,
\, \mu \in\Lambda_{\emptyset,1}$ and for all $w, \, w'
\in\Lambda_{\emptyset,2}$,

\noindent
(i)$$
d_{\lambda,\mu_{w}}=d_{\lambda,\mu_w^*}
$$
$$
d_{\lambda_{w'},\mu}=d_{\lambda_{w'}^*,\mu}
$$
$$
d_{\lambda_{w'},\mu_w}=d_{\lambda_{w'}^*,\mu_w^*} \; \; \mbox{and} \; \;
d_{\lambda_{w'},\mu_w^*}=d_{\lambda_{w'}^*,\mu_w},
$$

\noindent
(ii)$$
d'_{\lambda;+,\mu;+}+d'_{\lambda;-,\mu;+}= d_{\lambda,\mu}=d'_{\lambda;+,\mu;-}+d'_{\lambda;-,\mu;-}
$$
$$
d'_{\lambda;+,\mu;+}=d'_{\lambda;-,\mu;-} \; \; \mbox{and} \; \; d'_{\lambda;+,\mu;-}=d'_{\lambda;-,\mu;+}
$$
$$
d'_{\lambda;+,\mu_w}=d'_{\lambda;-,\mu_w}=d_{\lambda,\mu_{w}},
$$

\noindent
(iii)$$d_{\lambda_{w'},\mu_w}+d_{\lambda_{w'},\mu_w^*}=d'_{\lambda_{w'},\mu_{w}}$$
$$d'_{\lambda_{w'},\mu;+}=d'_{\lambda_{w'},\mu;-}=d_{\lambda_{w'},\mu}.$$

\end{theorem}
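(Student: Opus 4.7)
For part (i), the crucial observation is that tensoring with a linear character of $p'$-order commutes with the operator $\varphi\mapsto\varphi^{p\textrm{-reg}}$: multiplying the identity $\chi^{p\textrm{-reg}}=\sum_\theta d_{\chi,\theta}\,\theta$ by $\varepsilon$ gives $(\varepsilon\chi)^{p\textrm{-reg}}=\sum_\theta d_{\chi,\theta}\,(\varepsilon\theta)$, and since $p$ is odd each $\varepsilon\theta$ is again an irreducible Brauer character, so $d_{\varepsilon\chi,\varepsilon\theta}=d_{\chi,\theta}$. Combined with $\varepsilon\chi_\lambda=\chi_{\lambda^*}$ and the $\varepsilon$-equivariance of $\Phi$ (which gives $\varepsilon\theta_\mu=\theta_{\mu^*}$), this yields the general identity $d_{\lambda,\mu}=d_{\lambda^*,\mu^*}$. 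Specializing to the three cases $\lambda$ self-conjugate, $\mu$ self-conjugate, and neither self-conjugate produces the four identities of (i).

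For parts (ii) and (iii), use that restriction from $\sym_n$ to $\A_n$ commutes with $\varphi\mapsto\varphi^{p\textrm{-reg}}$. Using (i), regroup
$$\chi_\lambda^{p\textrm{-reg}}=\sum_{\nu\in\Lambda_{\emptyset,1}}d_{\lambda,\nu}\,\theta_\nu+\sum_{w\in\Lambda_{\emptyset,2}}d_{\lambda,\mu_w}(\theta_{\mu_w}+\theta_{\mu_w^*}),$$
and analogously for $\chi_{\lambda_{w'}}^{p\textrm{-reg}}$. Clifford's theorem gives $\Res_{\A_n}^{\sym_n}(\theta_\nu)=\theta'_{\nu,+}+\theta'_{\nu,-}$ for $\nu\in\Lambda_{\emptyset,1}$ and $\Res_{\A_n}^{\sym_n}(\theta_{\mu_w})=\Res_{\A_n}^{\sym_n}(\theta_{\mu_w^*})=\theta'_{\mu_w}$. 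Restricting and comparing with the expansions of $\rho_{\lambda,+}^{p\textrm{-reg}}+\rho_{\lambda,-}^{p\textrm{-reg}}$ and of $\rho_{\lambda_{w'}}^{p\textrm{-reg}}$ in $\B_{\emptyset,\A_n}$ yields (iii) directly, together with the additive identities $d'_{\lambda;+,\mu;+}+d'_{\lambda;-,\mu;+}=d_{\lambda,\mu}=d'_{\lambda;+,\mu;-}+d'_{\lambda;-,\mu;-}$ and $d'_{\lambda;+,\mu_w}+d'_{\lambda;-,\mu_w}=2d_{\lambda,\mu_w}$ that appear in (ii).

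To separate the individual plus/minus values in (ii), invoke the outer automorphism of $\A_n$ induced by conjugation by a fixed $\tau\in\sym_n\setminus\A_n$. For $\nu\in\Lambda_{\emptyset,1}$ this automorphism swaps $\rho_{\nu,+}\leftrightarrow\rho_{\nu,-}$ and $\theta'_{\nu,+}\leftrightarrow\theta'_{\nu,-}$, whereas $\theta'_{\mu_w}$ (and $\rho_{\lambda_w}$) is $\tau$-invariant because it is the restriction of a class function of $\sym_n$. Applying $\tau$ to the displayed decomposition of $\rho_{\lambda,+}^{p\textrm{-reg}}$ and comparing with that of $\rho_{\lambda,-}^{p\textrm{-reg}}$ gives the symmetries $d'_{\lambda;+,\mu;+}=d'_{\lambda;-,\mu;-}$, $d'_{\lambda;+,\mu;-}=d'_{\lambda;-,\mu;+}$ and $d'_{\lambda;+,\mu_w}=d'_{\lambda;-,\mu_w}$; combined with the additive identity this forces $d'_{\lambda;\pm,\mu_w}=d_{\lambda,\mu_w}$. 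The main technical point is verifying that $\tau$-conjugation swaps $\theta'_{\nu,\pm}$, which follows because each $\theta_\nu$ with $\nu\in\Lambda_{\emptyset,1}$ is $\varepsilon$-stable on $\sym_n$ (its label is fixed under $\varepsilon$ and $\Phi$ is $\varepsilon$-equivariant), so by standard Clifford theory the two constituents of its restriction to $\A_n$ are conjugate by $\tau$.
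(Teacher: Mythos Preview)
Your proof is correct and follows essentially the same approach as the paper: part (i) comes from the $\varepsilon$-equivariance of $\Phi$ together with $d_{\varepsilon\chi,\varepsilon\theta}=d_{\chi,\theta}$; the additive identities in (ii) and (iii) come from restricting $\chi_\lambda^{p\textrm{-reg}}$ and $\chi_{\lambda_{w'}}^{p\textrm{-reg}}$ to $\A_n$ and comparing coefficients; and the symmetries in (ii) come from the outer automorphism of $\A_n$ given by conjugation by a transposition. Your final paragraph justifying that this automorphism swaps $\theta'_{\nu,+}$ and $\theta'_{\nu,-}$ makes explicit a point the paper leaves implicit.
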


\begin{proof}

\noindent
(i) This is a direct consequence of our parametrizations and our choice
of $\Phi$.

\noindent
(ii) Since
$\Res_{\A_n}^{\sym_n}(\chi_{\lambda})=\rho_{\lambda,+}+\rho_{\lambda,-}$
, $\Res_{\A_n}^{\sym_n}(\theta_{\mu})=\theta_{\mu,+}'+\theta_{\mu,-}'$
and
$\Res_{\A_n}^{\sym_n}(\theta_{\mu_w})=\Res_{\A_n}^{\sym_n}(\theta_{\mu_w
^*})=\theta_{\mu_w}'$, we immediately get
$$d'_{\lambda;+,\mu;+}+d'_{\lambda;-,\mu;+}=
d_{\lambda,\mu}=d'_{\lambda;+,\mu;-}+d'_{\lambda;-,\mu;-}
\; \mbox{and} \;
d_{\lambda,\mu_w}+d_{\lambda,\mu_w^*}=d'_{\lambda;+,\mu_w}+d'_{\lambda;-
,\mu_w}.$$
Now, if we write $\tau$ for the automorphism of $\A_n$ induced by
conjugation by a transposition of $\sym_n$, then $\tau$ also acts on
$\Irr(\A_n)$ and $\Br(\A_n)$, and we have
$$ \tau(\theta_{\mu,+}')=\theta_{\mu,-}', \; \; \tau ( \theta_{\mu_w}') = \theta_{\mu_w}' \; \; \mbox{and} \; \;  \tau( \rho_{\lambda,+}) = \rho_{\lambda,-}.$$
This implies
$$d'_{\lambda;+,\mu;+}=d'_{\lambda;-,\mu;-} , \; \;  d'_{\lambda;+,\mu;-}=d'_{\lambda;-,\mu;+} \; \; \mbox{and} \; \; d'_{\lambda;+,\mu_w}=d'_{\lambda;-,\mu_w}.$$
Together with (i) and the above, this finally gives
$d'_{\lambda;+,\mu_w}=d'_{\lambda;-,\mu_w}=d_{\lambda,\mu_{w}}$.

\noindent
(iii) Since
$\Res_{\A_n}^{\sym_n}(\chi_{\lambda_{w'}})=\Res_{\A_n}^{\sym_n}(\chi_{
\lambda_{w'}^*})=\rho_{\lambda_{w'}}$,
$\Res_{\A_n}^{\sym_n}(\theta_{\mu})=\theta_{\mu,+}'+\theta_{\mu ,-}'$
and $ \Res_{\A_n}^{\sym_n}(\theta_{\mu_w})=\Res_{\A_n}^{\sym_n}(\theta_{
\mu_w^*})=\theta_{\mu_w}'$, the desired equalities follow easily.

\end{proof}

We now show on a picture how to use Theorem \ref{nbdecAn}
to obtain decomposition numbers of $\A_n$, knowing those of
$\sym_n$ (or conversely). We denote by $D_{\B_{\emptyset}}$
the restriction to the basic set $\B_{\emptyset}$ of the
$p$-decomposition matrix of $\sym_n$, and $D_{\B_{\emptyset,\A_n}}$
the restriction to $\B_{\emptyset,\A_n}$ of the decomposition matrix
of $\A_n$. Using the notations of Theorem \ref{nbdecAn}, we write
$a=d_{\lambda,\mu}$, $b=d_{\lambda,\mu_{w}}$, $c=d_{\lambda_{w'},\mu}$,
$d=d_{\lambda_{w'},\mu_w}$, $e=d_{\lambda_{w'},\mu_w^*}$,
$\alpha=d'_{\lambda;+,\mu;+}$ and $\beta=d'_{\lambda;+,\mu;-}$. We also
write $a_1=|\Lambda_{\emptyset,1}|$ and $a_2=|\Lambda_{\emptyset,2}|$.
We start, at the level of $\sym_n$, with

\setlength{\unitlength}{0.8mm}

\begin{center}

\begin{picture}(100,95)

\put(20,20) {\line(0,1){60}}

\put(50,20) {\line(0,1){60}}

\put(80,20) {\line(0,1){60}}

\put(20,20) {\line(1,0){60}}

\put(20,50) {\line(1,0){60}}

\put(20,80) {\line(1,0){60}}

\dottedline{1}(20,20)(20,10)

\put(20,10){$\longleftarrow$}

\put(44,10){$\longrightarrow$}

\put(50,10){$\longleftarrow$}

\put(74,10){$\longrightarrow$}

\put(34,10){$a_1$}

\put(62,10){$2a_2$}

\dottedline{1}(50,20)(50,10)

\dottedline{1}(80,20)(80,10)

\put(0,35){$\Lambda_{\emptyset,2}$}

\put(0,65){$\Lambda_{\emptyset,1}$}

\put(60,90){$\Lambda_{\emptyset,2}$}

\put(30,90){$\Lambda_{\emptyset,1}$}

\put(0,90){$D_{\B_{\emptyset}}$}

\put(10,30){$\chi_{\lambda_{w'}^*}$}

\put(10,40){$\chi_{\lambda_{w'}}$}

\put(10,65){$\chi_{\lambda}$}

\put(34,83){$\theta_{\mu}$}

\put(56,83){$\theta_{\mu_w}$}

\put(67,83){$\theta_{\mu_w^*}$}

\dottedline{1}(20,65)(34,65)

\dottedline{1}(20,40)(34,40)

\dottedline{1}(20,30)(34,30)

\dottedline{1}(38,65)(57,65)

\dottedline{1}(38,40)(57,40)

\dottedline{1}(38,30)(57,30)

\dottedline{1}(61,65)(68,65)

\dottedline{1}(61,40)(68,40)

\dottedline{1}(61,30)(68,30)

\dottedline{1}(72,65)(80,65)

\dottedline{1}(72,40)(80,40)

\dottedline{1}(72,30)(80,30)

\put(35,64){$a$}

\put(35,39){$c$}

\put(35,29){$c$}

\put(58,64){$b$}

\put(58,39){$d$}

\put(58,29){$e$}

\put(69,64){$b$}

\put(69,39){$e$}

\put(69,29){$d$}

\dottedline{1}(36,80)(36,67)

\dottedline{1}(36,63)(36,42)

\dottedline{1}(36,38)(36,32)

\dottedline{1}(36,28)(36,20)

\dottedline{1}(59,80)(59,67)

\dottedline{1}(59,63)(59,42)

\dottedline{1}(59,38)(59,32)

\dottedline{1}(59,28)(59,20)

\dottedline{1}(70,80)(70,67)

\dottedline{1}(70,63)(70,42)

\dottedline{1}(70,38)(70,32)

\dottedline{1}(70,28)(70,20)

\end{picture}

\end{center}

Then, by Theorem \ref{nbdecAn}, we obtain, at the level of $\A_n$,

\setlength{\unitlength}{0.8mm}

\begin{center}

\begin{picture}(100,95)

\put(20,20) {\line(0,1){60}}

\put(50,20) {\line(0,1){60}}

\put(80,20) {\line(0,1){60}}

\put(20,20) {\line(1,0){60}}

\put(20,50) {\line(1,0){60}}

\put(20,80) {\line(1,0){60}}

\dottedline{1}(20,20)(20,10)

\put(20,10){$\longleftarrow$}

\put(44,10){$\longrightarrow$}

\put(50,10){$\longleftarrow$}

\put(74,10){$\longrightarrow$}

\put(32,10){$2a_1$}

\put(64,10){$a_2$}

\dottedline{1}(50,20)(50,10)

\dottedline{1}(80,20)(80,10)

\put(0,35){$\Lambda_{\emptyset,2}$}

\put(0,65){$\Lambda_{\emptyset,1}$}

\put(60,90){$\Lambda_{\emptyset,2}$}

\put(30,90){$\Lambda_{\emptyset,1}$}

\put(0,90){$D_{\B_{\emptyset,\A_n}}$}

\put(10,60){$\rho_{\lambda,-}$}

\put(10,70){$\rho_{\lambda,+}$}

\put(10,35){$\rho_{\lambda_{w'}}$}

\put(64,83){$\theta_{\mu_w}$}

\put(26,83){$\theta_{\mu,+}$}

\put(37,83){$\theta_{\mu,-}$}

\dottedline{1}(20,35)(27,35)

\dottedline{1}(20,70)(27,70)

\dottedline{1}(20,60)(27,60)

\dottedline{1}(31,35)(38,35)

\dottedline{1}(31,70)(38,70)

\dottedline{1}(31,60)(38,60)

\dottedline{1}(42,35)(61,35)

\dottedline{1}(42,70)(64,70)

\dottedline{1}(42,60)(64,60)

\dottedline{1}(71,35)(80,35)

\dottedline{1}(68,70)(80,70)

\dottedline{1}(68,60)(80,60)

\put(62,34){$d+e$}

\put(65,69){$b$}

\put(65,59){$b$}

\put(28,34){$c$}

\put(28,69){$\alpha$}

\put(28,59){$\beta$}

\put(39,34){$c$}

\put(39,69){$\beta$}

\put(39,59){$\alpha$}

\dottedline{1}(66,80)(66,72)  

\dottedline{1}(66,68)(66,62)  

\dottedline{1}(66,58)(66,37)  

\dottedline{1}(66,33)(66,20)  

\dottedline{1}(29,80)(29,72)

\dottedline{1}(29,68)(29,62)

\dottedline{1}(29,58)(29,37)

\dottedline{1}(29,33)(29,20)

\dottedline{1}(40,80)(40,72)

\dottedline{1}(40,68)(40,62)

\dottedline{1}(40,58)(40,37)

\dottedline{1}(40,33)(40,20)

\end{picture}

\end{center}
Note that, by Theorem \ref{nbdecAn}, we have $\alpha + \beta = a$.

\subsection{Consequences}
In this section, we suppose that we know the decomposition matrix
$D$ of $\sym_n$. We denote by $D_{\B_\emptyset}$ the restriction of
$D$ to the $p$-basic set $\B_{\emptyset}$ as above. Let $D_{n,p}$
be the submatrix of $D_{\B_\emptyset}$ whose entries are all
the entries of $D_{\B_\emptyset}$ which lie at the intersection
of an $\varepsilon$-stable row and an $\varepsilon$-column of
$D_{\B_\emptyset}$. We can restrict this matrix to $\A_n$. We thus
obtain a matrix $D_{n,p}'$ which has twice as many rows and columns as
$D_{n,p}$.

\begin{theorem}\label{constmat}
We keep the notation as above. We suppose that the $p$-decomposition
matrix $D$ of $\sym_n$ and the matrix $D_{n,p}'$ are known. Then we can
construct the $p$-decomposition matrix of $\A_n$.
\end{theorem}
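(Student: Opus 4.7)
The strategy is to build the decomposition matrix of $\A_n$ in two steps: first produce the restriction $D_{\B_{\emptyset,\A_n}}$ of that matrix to the basic set of Theorem~\ref{BSAn}, and then extend it to all of $\Irr(\A_n)$ via a suitable transition matrix.

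For the first step, Theorem~\ref{nbdecAn} expresses every entry of $D_{\B_{\emptyset,\A_n}}$ in terms of entries of $D_{\B_\emptyset}$, with a single source of ambiguity. The entries involving at least one index in $\Lambda_{\emptyset,2}$ (i.e.\ of the form $d'_{\lambda_{w'},\mu_w}$, $d'_{\lambda_{w'},\mu;\pm}$ or $d'_{\lambda;\pm,\mu_w}$ with $\lambda,\mu\in\Lambda_{\emptyset,1}$ and $w,w'\in\Lambda_{\emptyset,2}$) are either single entries or sums of two entries of $D_{\B_\emptyset}$, by parts (i)--(iii) of Theorem~\ref{nbdecAn}; they are therefore immediately determined by $D$. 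The remaining entries, corresponding to pairs $(\lambda,\mu)\in\Lambda_{\emptyset,1}\times\Lambda_{\emptyset,1}$, satisfy only $d'_{\lambda;+,\mu;+}+d'_{\lambda;+,\mu;-}=d_{\lambda,\mu}$ by Theorem~\ref{nbdecAn}(ii). The individual values $\alpha=d'_{\lambda;+,\mu;+}$ and $\beta=d'_{\lambda;+,\mu;-}$ cannot be recovered from $D$ alone, but they are precisely the data encoded in $D_{n,p}'$; once $D_{n,p}'$ is given, Theorem~\ref{nbdecAn}(ii) fills in all the remaining entries (recall $d'_{\lambda;-,\mu;-}=\alpha$ and $d'_{\lambda;-,\mu;+}=\beta$), so $D_{\B_{\emptyset,\A_n}}$ is completely determined.

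For the second step, the matrix $D_{\B_\emptyset}$ is the submatrix of $D$ obtained by restricting to the rows indexed by $\Lambda_\emptyset$; since it is unimodular, one can compute $P_{\B_\emptyset}=D\cdot D_{\B_\emptyset}^{-1}$, which expresses each $\chi_\mu^{p\textrm{-reg}}$ as a $\Z$-linear combination of the $\chi_\lambda^{p\textrm{-reg}}$ for $\lambda\in\Lambda_\emptyset$. Following the argument in the proof of Theorem~\ref{BSAn}, one restricts these identities to $\A_n$: for $\mu\neq\mu^*$ the character $\Res_{\A_n}^{\sym_n}(\chi_\mu)=\rho_\mu$ yields a direct expansion of $\rho_\mu^{p\textrm{-reg}}$ in $\B_{\emptyset,\A_n}^{p\textrm{-reg}}$; for self-conjugate $\mu$ not in the set $A$, the identity $\rho_{\mu,+}^{p\textrm{-reg}}=\rho_{\mu,-}^{p\textrm{-reg}}$ (valid because $\overline{\mu}$ then labels a $p$-singular class of $\sym_n$) together with the integrality result of Remark~\ref{valentiere} lets one divide the relevant coefficients by $2$ and still land in $\Z$, as in the proof of Theorem~\ref{BSAn}. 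This produces the transition matrix $P_{\B_{\emptyset,\A_n}}$ from $D$ alone, and the full $p$-decomposition matrix of $\A_n$ is then the product $P_{\B_{\emptyset,\A_n}}\cdot D_{\B_{\emptyset,\A_n}}$.

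The only real obstacle is the splitting of $d_{\lambda,\mu}$ into $\alpha$ and $\beta$ for $\lambda,\mu\in\Lambda_{\emptyset,1}$: the two Brauer characters $\theta_{\mu,+}'$ and $\theta_{\mu,-}'$ are interchanged by the automorphism of $\A_n$ induced by conjugation by a transposition, so the information distinguishing them is genuinely invisible from the side of $\sym_n$. The hypothesis that $D_{n,p}'$ is known supplies exactly this missing data, and with it the two-step construction above determines the full $p$-decomposition matrix of $\A_n$.
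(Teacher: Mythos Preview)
Your proof is correct and follows the same two-step strategy as the paper: first use Theorem~\ref{nbdecAn} to determine all entries of $D_{\B_{\emptyset,\A_n}}$ outside the block $D_{n,p}'$ from $D$, then recover the full decomposition matrix via the transition matrix $P_{\B_{\emptyset,\A_n}}$. You go further than the paper by spelling out explicitly how $P_{\B_{\emptyset,\A_n}}$ is computed from $D$ alone (via restriction and the divisibility argument from the proof of Theorem~\ref{BSAn}), a point the paper leaves as a parenthetical assumption.
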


\begin{proof}
As above, we denote by $D_{\B_{\emptyset,\A_n}}$ the restriction
of the $p$-decomposition matrix of $\A_n$ to the $p$-basic set
$\B_{\emptyset,\A_n}$ (obtained in Theorem~\ref{BSAn}). As explained
at the begining of Section~\ref{consequence}, to compute the
$p$-decomposition matrix of $\A_n$, it is sufficient to compute the
matrix $D_{\B_{\emptyset,\A_n}}$ (again, provided we know the matrix
$P_{\B_{\emptyset,\A_n}}$). Furthermore, using Theorem~\ref{nbdecAn},
every coefficient of $D_{\B_{\emptyset,\A_n}}$ which does not lie in
$D_{n,p}'$ can be obtained from $D$. The result follows.
\end{proof}

The above theorem shows that, in order to deduce the $p$-decomposition
matrix of $\A_n$ from that of $\sym_n$, we just have to understand
the matrix $D_{n,p}'$, which is a small matrix compared to the
$p$-decomposition matrix of $\A_n$.

Following \cite[\S6.3]{James-Kerber}, we say that a matrix has wedge
shape if its rows and its columns can be ordered in such a way that the
resulting matrix is a lower triangular matrix with diagonal entries
equal to $1$.

\begin{lemma}
We suppose that $\sym_n$ has a $p$-basic set $B'$ satisfying the
properties of Theorem~\ref{BSn}, and such that the restriction matrix
$D_{B'}$ has wedge shape. Then the matrix $D_{n,p}$ defined above
(and obtained from the $p$-basic set $\B_{\emptyset}$ constructed in
Theorem~\ref{BSn}) has wedge shape.
\label{triangulaire}
\end{lemma}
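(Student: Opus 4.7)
The plan is to reduce the problem to showing that the restriction of the wedge-shaped matrix $D_{B'}$ to its $\varepsilon$-stable rows and $\varepsilon$-stable columns is itself wedge-shaped, and to identify this restriction with $D_{n,p}$. Both $B'$ and $\B_{\emptyset}$ satisfy condition~(2) of Theorem~\ref{BSn}, so they contain exactly the same self-conjugate characters, namely the $\chi_\lambda$ with $\lambda=\lambda^*$ and $\overline{\lambda}$ $p$-regular. Since an irreducible character $\chi_\lambda$ of $\sym_n$ is $\varepsilon$-stable if and only if $\lambda=\lambda^*$, the $\varepsilon$-stable rows of $D_{B'}$ and of $D_{\B_{\emptyset}}$ coincide; and as the columns of both matrices are indexed by all of $\Br(\sym_n)$, the matrix $D_{n,p}$ is exactly the submatrix of $D_{B'}$ selected by the $\varepsilon$-stable rows and the $\varepsilon$-stable Brauer characters.

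Next, I would fix an ordering $\chi_1,\ldots,\chi_m$ of $B'$ and $\theta_1,\ldots,\theta_m$ of $\Br(\sym_n)$ realizing the wedge shape, so that $d_{\chi_i,\theta_j}=0$ for $j>i$ and $d_{\chi_i,\theta_i}=1$. The actions of $\varepsilon$ on $B'$ and on $\Br(\sym_n)$ transport to permutations $\sigma$ and $\tau$ of $\{1,\ldots,m\}$, and the identity $(\varepsilon\chi)^{p\textrm{-reg}}=\varepsilon\cdot\chi^{p\textrm{-reg}}$, via the computation already used in the proof of Proposition~\ref{bijection}, yields the invariance $d_{\chi_i,\theta_j}=d_{\chi_{\sigma(i)},\theta_{\tau(j)}}$ for all $i,j$.

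The heart of the argument, which I expect to be the main obstacle, is to prove that $\sigma=\tau$. For each row $i$, the wedge shape forces $\max\{j:d_{\chi_i,\theta_j}\neq 0\}=i$. Applying the invariance rewrites this set as $\tau^{-1}\bigl(\{k:d_{\chi_{\sigma(i)},\theta_k}\neq 0\}\bigr)$; since $d_{\chi_{\sigma(i)},\theta_{\sigma(i)}}=1\neq 0$, the index $\sigma(i)$ lies in the latter set, so $\tau^{-1}(\sigma(i))\leq i$ for every $i$. Summing these inequalities over $i$ and observing that $\tau^{-1}\circ\sigma$ is a permutation of $\{1,\ldots,m\}$, so that the left-hand sum equals $\sum_i i$, forces termwise equality, giving $\sigma=\tau$.

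Once this is established, the set $F=\{i:\sigma(i)=i\}$ simultaneously indexes the $\varepsilon$-stable rows and the $\varepsilon$-stable columns of $D_{B'}$ in the chosen ordering. The submatrix $(d_{\chi_i,\theta_j})_{i,j\in F}$ then inherits $d_{\chi_i,\theta_j}=0$ for $i<j$ and $d_{\chi_i,\theta_i}=1$, so it is lower triangular with $1$'s on the diagonal in the order induced from $\{1,\ldots,m\}$, i.e.\ it has wedge shape. By the first paragraph this submatrix is precisely $D_{n,p}$, which is the desired conclusion.
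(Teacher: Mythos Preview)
Your argument is correct and begins exactly as the paper does: both you and the paper observe that, since $B'$ and $\B_{\emptyset}$ share Property~(2) of Theorem~\ref{BSn}, they contain the same $\varepsilon$-stable characters, so $D_{n,p}$ coincides with the corresponding submatrix of $D_{B'}$. The paper then simply writes ``The result follows'' at this point, leaving the inheritance of wedge shape to the reader.

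You go considerably further by actually justifying this last step. Your key observation---that the permutations $\sigma$ and $\tau$ induced by $\varepsilon$ on the rows and columns (in the fixed wedge ordering) must coincide---is exactly what is needed, and your proof of $\sigma=\tau$ via the inequality $\tau^{-1}(\sigma(i))\leq i$ and the summation trick is clean and correct. Once $\sigma=\tau$, the fixed-point set $F$ indexes both the $\varepsilon$-stable rows and the $\varepsilon$-stable columns simultaneously, and the principal submatrix on $F$ clearly inherits lower-triangularity with unit diagonal. So your proof is a strictly more detailed version of the paper's, filling in the step it omits; no part of your approach is at odds with it.
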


\begin{proof}
Let $\chi_{\lambda}\in B'$ be such that $\lambda=\lambda^*$.
Since $B'$ has Property (2) of Theorem~\ref{BSn}, it follows
that $\overline{\lambda}$ is $p$-regular. Furthermore, by
Lemma~\ref{preg}, we see that $\lambda\in\Lambda_{\emptyset}$. Hence,
the $\varepsilon$-stable characters in $B'$ are the same as those in
$\B_{\emptyset}$. The result follows.
\end{proof}

\begin{remark} The $p$-basic set $\B_{\emptyset}$ does not have wedge
shape. The first counter-example is given by $\sym_6$ with $p=3$. In
this case, we have
$$D_{\B_{\emptyset}}=
\begin{bmatrix}
 1& 0& 0& 0& 0& 0& 0 \\
 0& 1& 0& 0& 0& 0& 0 \\
 1& 1& 1& 1& 1& 0& 0 \\
 0& 0& 0& 0& 0& 1& 0 \\
 0& 0& 0& 0& 0& 0& 1 \\
 0& 0& 1& 1& 0& 0& 0 \\
 0& 0& 1& 0& 1& 0& 0 
\end{bmatrix},$$
which has no wedge shape. However, there is, at the moment, no
example where Lemma~\ref{triangulaire} does not apply. Indeed, one
can check, using \textsc{Gap}~\cite{gap}, that, for $1\leq n\leq 18$
and for any odd prime, there always exists a $p$-basic set $B'$ as in
Lemma~\ref{triangulaire}. We can then conjecture that such a $p$-basic
set of $\sym_n$ always exists and that $D_{n,p}$ has wedge shape.
\end{remark}

\begin{lemma}
With the above notation, if $D_{n,p}$ has wedge shape, then so does
$D_{n,p}'$.
\label{triangulaireAn}
\end{lemma}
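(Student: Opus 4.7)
The plan is to use the wedge shape hypothesis on $D_{n,p}$ to construct an ordering of the rows and columns of $D_{n,p}'$ making it lower triangular with $1$'s on the diagonal. First, order the elements of $\Lambda_{\emptyset,1}$ as $\lambda_1,\ldots,\lambda_s$ so that $d_{\lambda_i,\lambda_j}=0$ for $i<j$ and $d_{\lambda_i,\lambda_i}=1$ for all $i$, and group the rows of $D_{n,p}'$ into consecutive pairs $((\lambda_i,+),(\lambda_i,-))$ (and likewise the columns). In this way $D_{n,p}'$ is displayed as a matrix of $2\times 2$ blocks indexed by pairs $(i,j)$.

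By Theorem~\ref{nbdecAn}(ii), the $(i,j)$-block has diagonal entries $\alpha_{ij}:=d'_{\lambda_i;+,\lambda_j;+}=d'_{\lambda_i;-,\lambda_j;-}$ and off-diagonal entries $\beta_{ij}:=d'_{\lambda_i;+,\lambda_j;-}=d'_{\lambda_i;-,\lambda_j;+}$, with $\alpha_{ij},\beta_{ij}\in\N$ and $\alpha_{ij}+\beta_{ij}=d_{\lambda_i,\lambda_j}$. Consequently, if $i<j$, then $\alpha_{ij}=\beta_{ij}=0$ and the block is zero; if $i=j$, the relation $\alpha_{ii}+\beta_{ii}=1$ together with $\alpha_{ii},\beta_{ii}\in\N$ forces each diagonal block to equal either the $2\times 2$ identity matrix, or the $2\times 2$ transposition $\left(\begin{smallmatrix}0&1\\1&0\end{smallmatrix}\right)$.

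To conclude, for every index $i$ for which the $(i,i)$-block is the transposition matrix, swap the two rows of $D_{n,p}'$ labelled $(\lambda_i,+)$ and $(\lambda_i,-)$ (that is, relabel $+$ and $-$ on the row side for those $\lambda_i$). Each such row-swap turns the corresponding diagonal block into the identity, leaves the zero upper-triangular blocks unchanged, and merely interchanges the two rows of each lower-triangular block on the $\lambda_i$-row, keeping it entirely below the main diagonal. After these row-swaps the matrix is lower triangular with $1$'s on the diagonal, so $D_{n,p}'$ has wedge shape.

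The argument is essentially direct; there is no real obstacle. The content of the proof lies entirely in exploiting the symmetric $2\times 2$-block structure produced by Theorem~\ref{nbdecAn}(ii), together with the integrality $\alpha_{ii},\beta_{ii}\in\N$ used on the diagonal to pin down $(\alpha_{ii},\beta_{ii})\in\{(1,0),(0,1)\}$.
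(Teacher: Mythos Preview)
Your proof is correct and follows essentially the same route as the paper's: pair up the rows and columns of $D_{n,p}'$, use Theorem~\ref{nbdecAn}(ii) to see each $2\times 2$ block has the form $\left(\begin{smallmatrix}\alpha&\beta\\\beta&\alpha\end{smallmatrix}\right)$ with $\alpha+\beta$ equal to the corresponding entry of $D_{n,p}$, and then swap the two rows in any pair whose diagonal block is the transposition matrix. One small imprecision worth noting is that the wedge-shape hypothesis only guarantees \emph{separate} row and column orderings (the paper uses two permutations $\tau_r$ and $\tau_c$), not a single ordering of $\Lambda_{\emptyset,1}$; however, your argument goes through verbatim once you allow distinct orderings for the row-labels and column-labels.
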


\begin{proof}We suppose that $D_{n,p}$ is a matrix of size $r$ and has
wedge shape. Hence we can order the rows and columns in such a way that
the resulting matrix is lower triangular with diagonal entries equal
to $1$. This induces permutations $\tau_r$ and $\tau_c$ on the rows
and the columns of $D_{n,p}$ respectively. By definition, $D_{n,p}'$
is a matrix of size $2r$. Now, we first regroup the rows and the
columns of $D_{n,p}'$ in corresponding pairs, according to Theorem
\ref{nbdecAn}, and label the pairs by $(2k-1,2k)$ for $1\leq k\leq r$.
We then apply the permutations of the rows and the columns defined by
$(2k-1,2k)\mapsto (2\tau_r(k)-1,2\tau_r(k))$ and $(2k-1,2k)\mapsto
(2\tau_c(k)-1,2\tau_c(k))$ respectively. The resulting matrix is thus
lower triangular by blocks, and the blocks on the diagonal have size
$2$. To every coefficient $d_{i,j}$ of $D_{n,p}$, we associate the
submatrix $D_{i,j}$ of $D'_{n,p}=(d_{i,j}')$ given by
$$D_{i,j}=
\begin{bmatrix}
d'_{2i-1,2j-1}&d'_{2i-1,2j}\\
d'_{2i,2j-1}&d'_{2i,2j}
\end{bmatrix}.$$
In particular, the 2 by 2 blocks on the diagonal of $D'_{n,p}$ are the
$D_{i,i}$'s, $1\leq i\leq r$. Moreover, Theorem~\ref{nbdecAn} asserts
that $d'_{2i-1,2j-1}=d'_{2i,2j}$, $d'_{2i-1,2j}=d'_{2i,2j-1}$ and
$d'_{2i-1,2j-1}+d'_{2i-1,2j}=d_{i,j}$. Since $d_{i,i}=1$, we thus deduce
that
$$D_{i,i}=
\begin{bmatrix}
1&0\\
0&1
\end{bmatrix}\quad\textrm{or}\quad
D_{i,i}=
\begin{bmatrix}
0&1\\
1&0
\end{bmatrix}.
$$
We can now re-order the rows of $D_{n,p}'$ as follows: for every $1\leq
i\leq r$, if $D_{i,i}$ is not the identity matrix, then we permute the
rows $2i-1$ and $2i$ of $D_{n,p}'$. The resulting matrix has wedge
shape, as required.
\end{proof}

\begin{remark}
Note that, if $D_{n,p}$ has wedge shape, then, using our $p$-basic set
$\B_{\emptyset,\A_n}$ obtained in Theorem~\ref{BSAn}, we have a way to
parametrize the simple module of $\A_n$ in characteristic $p$ which are
not the restriction of a simple module of $\sym_n$.
\end{remark}

\begin{remark}
We now discuss some problems and questions. We suppose that
the matrix $D_{n,p}$ is known and has size $r$. Let $d_{i,j}$
be the $(i,j)$-coefficient of $D_{n,p}$. As in the proof of
Lemma~\ref{triangulaireAn}, we can associate to this coefficient the 2
by 2 matrix $D_{i,j}$. Following Theorem~\ref{constmat}, to construct
the $p$-decomposition matrix of $\A_n$, it is sufficient to compute
all the matrices $D_{i,j}$ ($1\leq i,j\leq r$). Furthermore, by
Theorem~\ref{nbdecAn}, we have
$$D_{i,j}=
\begin{bmatrix}
a&b\\
b&a
\end{bmatrix},
$$
with $a+b=d_{i,j}$.
Hence, the computation of the $p$-decomposition matrix of $\A_n$ is
reduced to the following problems. \begin{itemize}
\item For $1\leq i,j\leq r$, we have to determine the integers $a$ and
$b$ appearing in the matrix $D_{i,j}$.
\item If the coefficients $a$ and $b$ appearing in all the matrices
$D_{i,j}$ ($1\leq i,j\leq r$) are known, then a $p$-decomposition matrix
of $\A_n$ is known, up to some choices. We have to develop a rule to fix
theses choices.

\end{itemize}
\end{remark}

\noindent\textbf{Acknowledgements.}\quad
Part of this work was done during a visit of the first author in
Lausanne. The first author gratefully acknowledges financial support by
the IGAT (EPFL). We wish to thank Michael Cuntz for many helpful and
valuable discussions on the subject. Some of the ideas in this paper
originated from discussions with him during a postdoctoral stay of the
first author at the University of Kaiserlautern. We also thank Meinolf
Geck for his reading of the manuscript, and for his suggestions and
helpful comments on this paper.

\bibliographystyle{plain}
\bibliography{referencesJB}

\end{document}